\numberwithin{equation}{section}
\newtheorem{theorem}{Theorem}[section]
\newtheorem{lemma}[theorem]{Lemma}
\newtheorem{definition}[theorem]{Definition}
\newtheorem{remark}[theorem]{Remark}
\newcommand{\N}{\mathbb{N}}
\newcommand{\R}{\mathbb{R}}
\newcommand{\Leb}[1]{{\mathscr L}^{#1}} 
\newcommand{\e}{\varepsilon}
\renewcommand\div{\operatorname{div}}
 \newcommand{\tauV}{{\kern-3pt\tau}}
 \newcommand{\oVVVk}{\overline{\mbox{\boldmath$V$}}\kern-3pt}
 \newcommand{\tVVVk}{\tilde{\mbox{\boldmath$V$}}\kern-3pt}
 \newcommand{\eps}{\varepsilon}
 \newcommand{\negint}{{\int\negthickspace\negthickspace\negthickspace\negthickspace-}}
\begin{document}

\title{
Renormalized solutions to the continuity equation
\\ with an integrable damping term
}
\date{}

\author{   Maria Colombo\
   \thanks{Scuola Normale Superiore, Pisa. email: \textsf{maria.colombo@sns.it}}
 \and
Gianluca Crippa\    \thanks{Universit\"at Basel. email: \textsf{gianluca.crippa@unibas.ch}}
   \and
   Stefano Spirito\
     \thanks{GSSI - Gran Sasso Science Institute, L'Aquila. email:
   \textsf{stefano.spirito@gssi.infn.it}}
   }

\maketitle
\begin{abstract}
We consider the continuity equation with a nonsmooth vector field and a damping term. In their fundamental paper \cite{lions}, DiPerna and Lions proved that, when the damping term is bounded in space and time, the equation is well posed in the class of distributional solutions and the solution is transported by suitable characteristics of the vector field.
In this paper, we prove existence and uniqueness of renormalized solutions in the case of an integrable damping term, employing a new logarithmic estimate inspired by analogous ideas of Ambrosio, Lecumberry, and Maniglia \cite{alm}, Crippa and De Lellis \cite{crippade1} in the Lagrangian case.
\\
\\
{\bf MSC Classification}: {Primary: 35F16, Secondary: 37C10.}\\
\\
{\bf Keywords}: {Continuity and Transport Equations, Well-Posedeness, Lagrangian Flows, Renormalized Solutions.}
\end{abstract}

\section{Introduction}

In this paper we consider the Cauchy problem for the continuity equation, namely
\begin{equation}\label{eqn:cont-damp}
\begin{cases}
\partial_t u(t,x)+ \nabla \cdot (b(t,x)u(t,x))= c(t,x)u(t,x)\\
u(0,x)=u_0(x)
\end{cases}
\end{equation}
where $(t,x)\in(0,T)\times\R^d$, $u\in\R$, $b\in\R^d$  and $c\in\R$. In analogy with fluid dynamics we call {\em damping} the term $c$. The continuity equation is a fundamental tool to study various nonlinear partial differential equations of the mathematical physics and it is often essential to deal with densities or with velocity fields which are not smooth. Starting from the papers of DiPerna and Lions \cite{lions} and Ambrosio \cite{ambrosio}, a huge literature has been developed in this direction (for an overview, see \cite{amcri-edi} and the references quoted therein).

The continuity equation is strictly related to the ordinary differential equation
\begin{equation}
\label{eqn:ODE}
\begin{cases}
&\partial_t X(t,x) = b(t,X(t,x)) \qquad \forall t \in (0,T)
\\
&X(0,x) =x
\end{cases}
\end{equation}
for $x\in \R^d$.
Indeed, assuming that $u_0$, $b$ and $c$ are smooth and compactly supported, we call $X :[0,T]\times\R^d\rightarrow\R^d$ the flow of $b$, namely the solution of the ODE \eqref{eqn:ODE}, and we set $JX(t,x) := \det(\nabla_x X(t,x)) \neq 0$. The map $X(t,\cdot)$ is a diffeomorphism and we denote by $X^{-1}(t,\cdot) $ its inverse. A solution of  \eqref{eqn:cont-damp} is then given in term of the flow $X$ by the following well-known explicit formula 
\begin{equation}\label{eqn:cont-damp-sol}
u(t,x)= \frac{u_0(X^{-1}(t,\cdot)(x))}{JX(t, X^{-1}(t,\cdot)(x))} \exp \Big({\int_0^t c(\tau, X(\tau, X^{-1}(t,\cdot)(x))) \, d\tau} \Big).
\end{equation}
Denoting with $f_\sharp \mu$ the pushforward of a Borel measure $\mu$ on $\R^d$ through a Borel function $f:\R^d \to \R^d$,  \eqref{eqn:cont-damp-sol} can be equivalently rewritten as
\begin{equation}\label{eqn:cont-damp-sol-pushforw}
u(t,\cdot) \Leb{d} = X(t, \cdot)_\sharp \left( u_0 \exp\Big({\int_0^t c(\tau,X(\tau, \cdot)) \, d\tau}\Big) \Leb{d} \right).
\end{equation}

If $c\in L^\infty( (0,T) \times \R^d)$, under suitable (regularity and growth) assumptions on the velocity field ensuring the existence and uniqueness of a Lagrangian flow, DiPerna and Lions \cite{lions} showed that \eqref{eqn:cont-damp-sol-pushforw} is the unique distributional solution of \eqref{eqn:cont-damp} with initial datum $u_0$.
At a very formal level, their strategy to prove uniqueness consists in considering the difference $u$ between two solutions with the same initial datum, which by linearity solves \eqref{eqn:cont-damp} with initial datum $0$, and multiplying the equation by $2u$. They obtain
\begin{equation}
\label{eqn:dipl-strategy}
\begin{split}
\frac{d}{dt}\int_{\R^d} u(t,x)^2 \, dx 
&= \int_{\R^d} (2c(t,x)-\nabla \cdot b(t,x))  u(t,x)^2 \, dx
\\&
\leq
 (2\|c(t,\cdot)\|_{L^\infty(\R^d)}+ \|\nabla \cdot b(t,\cdot)\|_{L^\infty(\R^d)})\int_{\R^d} u(t,x)^2 \, dx.
\end{split}
\end{equation}
They conclude thanks to Gronwall lemma that $\int_{\R^d} u(t,x)^2 \, dx =0$ for every $t\in [0,T]$, which implies uniqueness.

If $c \in L^1( (0,T) \times \R^d)$ then \eqref{eqn:cont-damp-sol-pushforw} does not make sense as distributional solution even in the simplest autonomous cases. For instance, let $b(t,x) = 0$, $u_0 = 1_{[0,1]^d}$, and $c\in L^1(\R^d)$. A solution of \eqref{eqn:cont-damp} is given by $u(t,x) = u_0(x) e^{tc(x)}$; however $u(t,\cdot)$
may not belong to $L^1_{\rm loc}(\R^d)$ due to the low integrability of $c$.
In this case \eqref{eqn:cont-damp-sol} is not a distributional solution of \eqref{eqn:cont-damp}.

We notice however that, if we assume $c\in L^1( (0,T) \times \R^d)$, the function $u$ defined in \eqref{eqn:cont-damp-sol} is almost everywhere pointwise defined because 
\begin{equation}
\label{eqn:buona-def-JX}
\int_{\R^d} \int_0^T c(\tau,X(\tau, x)) \, d\tau \, dx
\leq C
\int_0^T\int_{\R^d} c(\tau,x) \, d\tau \, dx <\infty
\end{equation}
since the flow is assumed to compress the Lebesgue measure in a controlled way.
In the following, we introduce a natural notion of renormalized solution of \eqref{eqn:cont-damp} (see Definition \ref{defn:renorm-sol}) following \cite{lions} and we prove that the function defined in \eqref{eqn:cont-damp-sol} is a renormalized solution of \eqref{defn:renorm-sol-eq}. Then we move to the more delicate problem of uniqueness with this weak notion of solution. 
Here a different estimate with respect to \eqref{eqn:dipl-strategy} is needed, since already the formal computation \eqref{eqn:dipl-strategy} fails if we assume lower summability than $L^\infty$ for the damping $c$. In analogy with the logarithmic estimates introduced by Ambrosio, Lecumberry and Maniglia \cite{alm}, Crippa and De Lellis \cite{crippade1} for solutions to the ODE \eqref{eqn:ODE}, we perform a logarithmic estimate for solutions of the PDE \eqref{eqn:cont-damp}.
As in the computation \eqref{eqn:dipl-strategy}, we consider the difference $u$ of two solutions with the same initial datum and we multiply \eqref{eqn:cont-damp} by $u/(\delta+u^2)$, where $\delta>0$ is fixed, and we obtain
\begin{equation}
\label{eqn:ccs-strategy}
\begin{split}
\frac{d}{dt} \int_{\R^d} \! \log\Big(1+ \frac{ u(t,x)^2}{\delta} \Big) \, dx 
&= \int_{\R^d}\! \nabla \cdot b(t,x) \log\Big(1+ \frac{ u(t,x)^2}{\delta} \Big) \, dx\\
&+ \int_{\R^d}\!(c(t,x)-\nabla\cdot b(t,x)) \frac{ u(t,x)^2}{\delta + u(t,x)^2}  \, dx\\
&\leq\|\nabla \cdot b(t,\cdot)\|_{L^\infty(\R^d)} \int_{\R^d} \log\Big(1+ \frac{ u(t,x)^2}{\delta} \Big) \, dx\\
&+2\int_{\R^d} |c(t,x)|+|\nabla\cdot b(t,x)| \, dx.
\end{split}
\end{equation}
By Gronwall lemma we deduce that for every $t\in [0,T]$
\begin{equation*}
\begin{split}
 \int_{\R^d} \log\Big(1+ \frac{ u(t,x)^2}{\delta} \Big) \, dx 
&\leq
\exp\Big( \int_0^T \|\nabla \cdot b(t,\cdot)\|_{L^\infty(\R^d)} \, dt \Big)\\
&\hspace{2em}\cdot \int_0^T \int_{\R^d} 2(|c(t,x)|+|\nabla\cdot b(t,x)|) \, dx\, dt
;
\end{split}
\end{equation*}
letting finally $\delta$ go to $0$, since the right-hand side is independent on $\delta$ we obtain that $u(t, \cdot ) = 0$.
A justification of the estimate \eqref{eqn:ccs-strategy} in a nonsmooth setting requires some work, as in \cite{lions} to justify \eqref{eqn:dipl-strategy}. First, one needs to prove that the difference of renormalized solutions is still renormalized, which is not an automatic consequence of the linearity of the equation and of the theory of renormalized solutions. Moreover, to allow general growth conditions on $b$, one would like to localize the estimate. In \cite{lions}, general growth conditions were considered by means of a cutoff function and by a duality argument. Instead, we refine the estimate \eqref{eqn:ccs-strategy} by means of a decaying function.

\smallskip
The plan of the paper is the following. In Section~\ref{sec:esuniq} we introduce the notions of regular Lagrangian flow and of renormalized solution; then we state our existence and uniqueness result.
Sections~\ref{sec:ex} and~\ref{sec:un} are devoted to the proof of the main theorem.
In Section~\ref{sec:bmo} we show how we can use a logarithmic estimate like in \cite{crippade1} 
to prove an uniqueness result for the continuity equation when $\nabla \cdot b$ is a bounded mean oscillation function, giving a new proof of a result in~\cite{bmo}.

\smallskip
\noindent {\bf Acknowledgements.} 
This research has been partially supported by the SNSF grants 140232 and 156112. The first author acknowledges the kind hospitality of the University of Basel, where most of this work has been done. 
This work has been started while the third author was a PostDoc at the Departement Mathematik und Informatik of the Universit\"at Basel. He would like to thank the department for the hospitality and the support.

\section{Preliminaries and statement of the main result}\label{sec:esuniq}

We denote by $B_r(x) \subseteq \R^d$ the open ball of centre $x\in \R^d$ and radius $r>0$, shortened to $B_r$ if $x=0$.
In the case of a smooth, divergence free vector field $b$, the solution to the equation \eqref{eqn:cont-damp}, given by the explicit formula \eqref{eqn:cont-damp-sol} with $JX(t,x) = 1$, is obtained by transporting the initial datum $u_0$ along the flow of the vector field $b$, together with a correction due to the damping term $c$. To obtain a similar statement in the nonsmooth setting, we recall the notion of Lagrangian flow.
\begin{definition}\label{defn:rlf}
Let $T>0$ and  $b: (0,T) \times \R^d \to \R^d$ a Borel, locally integrable vector field. We say that $X:\R^d\times [0,T]\to\R^d$ is a regular Lagrangian flow of $b$ if the following two properties hold: 
\begin{itemize}
\item[(i)] for $\Leb{d}$-a.e. $x\in \R^d$, $X(\cdot,x)\in AC([0,T];\R^d)$ and solves the ODE $\dot x(t)=b(t,x(t))$ $\Leb{1}$-a.e.
in $(0,T)$, with the initial condition $X(0,x)=x$;
\item[(ii)] there exists a constant $C=C(X)$ satisfying $X(t,\cdot)_\#\Leb{d}\leq C\Leb{d}$.
\end{itemize}
\end{definition}

The well-celebrated papers of DiPerna and Lions \cite{lions} and of Ambrosio \cite{ambrosio} provide existence and uniqueness of the regular Lagrangian flow assuming local Sobolev or $BV$ regularity of $b$, boundedness of the distributional divergence $\nabla \cdot b$, and some growth conditions on $b$.

\begin{theorem}
Let $b\in L^1((0,T); BV_{\rm loc} (\R^d; \R^d))$ be a vector field that satisfies a bound on the divergence 
$\nabla \cdot b \in L^1((0,T); L^\infty(\R^d))$ and the growth condition
$$
\frac{|b(t,x)|}{1+|x|} \in L^1((0,T); L^1(\R^d))+ L^1((0,T); L^\infty(\R^d)).
$$
Then there exists a unique regular Lagrangian flow $X$ of $b$.
\end{theorem}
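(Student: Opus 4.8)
The plan is to reduce the statement to the well-posedness of the continuity equation $\partial_t u+\nabla\cdot(bu)=0$ (i.e.\ \eqref{eqn:cont-damp} with $c=0$) in the class of bounded distributional solutions, and then to transfer this to the Lagrangian level via the superposition principle. The cornerstone is the \emph{renormalization property}: under the stated hypotheses on $b$, every bounded distributional solution $u$ of the continuity equation satisfies $\partial_t\beta(u)+\nabla\cdot(b\,\beta(u))=(\beta(u)-u\beta'(u))\,\nabla\cdot b$ for every $\beta\in C^1\cap W^{1,\infty}$. To prove this one mollifies: $u_\e:=u*\rho_\e$ solves the equation up to the commutator $r_\e:=(b\cdot\nabla u)*\rho_\e-b\cdot\nabla u_\e$, and one must show $r_\e\to 0$ strongly in $L^1_{\rm loc}$. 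For $b\in L^1((0,T);W^{1,1}_{\rm loc})$ this is the DiPerna--Lions lemma, which follows by rewriting $r_\e$ as an average of difference quotients of $b$ tested against $u$; for the $BV$ case one needs Ambrosio's refinement, which exploits Alberti's rank-one theorem on the structure of the singular part of $Db$ to control the anisotropic interaction of $Db$ with the mollifier.

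Granted renormalization, uniqueness and a comparison principle for bounded distributional solutions follow in a standard way. Using a cutoff $\varphi_R(x)=\varphi(x/R)$, the divergence bound $\nabla\cdot b\in L^1((0,T);L^\infty)$ together with the growth condition (which controls the flux of $\varphi_R$ through $\nabla\varphi_R\cdot b/(1+|x|)$-type terms) let one run a Gronwall argument for $\int\beta(u)\,\varphi_R\,dx$ and conclude $u(t,\cdot)\equiv 0$ whenever $u(0,\cdot)=0$; hence bounded solutions are unique. Existence of a bounded solution for a given bounded initial datum comes from regularizing $b$ to smooth $b_\e$, with $\|\nabla\cdot b_\e\|_{L^\infty}\le\|\nabla\cdot b\|_{L^\infty}$ uniformly, solving via the classical explicit formula, and passing to the weak-$*$ limit using the uniform $L^\infty$ bound and the stability of the equation afforded by the growth condition.

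To produce the regular Lagrangian flow I would use the superposition principle. Let $X_\e$ be the classical flows of the $b_\e$; condition (ii) of Definition~\ref{defn:rlf} holds uniformly, since $(X_\e(t,\cdot))_\#\Leb{d}\le\exp(\|\nabla\cdot b\|_{L^1_t L^\infty_x})\,\Leb{d}$. Viewing $\eeta_\e:=(x\mapsto(x,X_\e(\cdot,x)))_\#(\Leb{d}\res K)$ as measures on $\R^d\times C([0,T];\R^d)$, the growth condition gives tightness, so along a subsequence $\eeta_\e\rightharpoonup\eeta$, with $\eeta$ concentrated on pairs $(x,\g)$ where $\g\in AC([0,T];\R^d)$ solves the ODE for $b$ with $\g(0)=x$, and whose time-marginals solve the continuity equation. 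By the uniqueness of bounded solutions of the continuity equation these marginals are uniquely determined, and a disintegration argument (as in Ambrosio's proof) forces $\eeta_x$ to be, for a.e.\ $x$, a Dirac mass on a single trajectory, which defines $X(\cdot,x)$ and yields existence. Uniqueness of the flow follows by the same mechanism: two regular Lagrangian flows generate two superposition measures whose time-marginals both solve the continuity equation with datum $\Leb{d}$, hence coincide, forcing the two flows to agree for $\Leb{d}$-a.e.\ $x$.

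The main obstacle is the commutator estimate in the $BV$ setting, that is, establishing the renormalization property without Sobolev regularity: the rank-one structure theorem and the delicate anisotropic bound on the interaction of $Db$ with difference quotients are the genuinely hard ingredients, and are precisely what restricts the theorem to $BV$ vector fields rather than arbitrary $b$ with bounded divergence. The remaining steps — the Gronwall/cutoff argument for PDE uniqueness, the a priori compactness of the approximating flows, and the superposition argument on path space — are by now routine once renormalization is in hand.
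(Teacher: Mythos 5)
This theorem is not proved in the paper at all: it is quoted directly from DiPerna--Lions \cite{lions} and Ambrosio \cite{ambrosio}, and your sketch follows precisely the strategy of those references --- renormalization of bounded distributional solutions via the commutator estimate (DiPerna--Lions in the Sobolev case, Ambrosio's rank-one/anisotropic refinement in the $BV$ case), uniqueness of bounded solutions of the continuity equation by a cutoff--Gronwall argument using the divergence bound and the growth condition, and the superposition principle on path space to obtain existence and uniqueness of the regular Lagrangian flow. The outline is correct, and you rightly identify the $BV$ commutator estimate as the genuinely hard ingredient that you are citing rather than proving.
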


\begin{remark}\label{rmk:inv-compr}
Under the assumptions of the previous theorem, thanks to the bilateral bound on $\nabla \cdot b$, the map $X(t, \cdot)$ is a.e. invertible for every $t\in [0,T]$. We denote by $X^{-1}(t,\cdot)$ the inverse map. Moreover, the compressibility constant $C(X)$ in Definition~\ref{defn:rlf} (ii) can be chosen as $\exp \big( \int_0^T \|\nabla \cdot b(t,\cdot) \|_{L^\infty(\R^d)} \, dt \big)$.
\end{remark}

When the vector field $b$ is divergence-free, the Jacobian of the flow is equal to $1$ in the explicit solution \eqref{eqn:cont-damp-sol} of \eqref{eqn:cont-damp}. Instead, when the vector field $b$ is not divergence-free, the Jacobian of the flow appears in \eqref{eqn:cont-damp-sol}. In the smooth setting, the Jacobian is defined as $JX(t,x) = \det(\nabla_x X(t,x))$, and satisfies the differential equation 
$$\partial_t JX(t,x) = JX(t,x) \nabla \cdot b( t, X(t,x))
\qquad
\forall (t,x) \in (0,T) \times \R^d.$$
In the nonsmooth setting, we define the Jacobian through an explicit formula; we will see in Lemma~\ref{lemma:jac} that this object satisfies a change of variable formula.
\begin{definition}\label{defn:jac}
Let $T>0$, $b$, $X$ as in Definition~\ref{defn:rlf}. Assume moreover that $\nabla \cdot b \in L^1((0,T); L^1_{\rm loc}(\R^d))$.
We define the Jacobian of $X$ as the measurable function $JX:(0,T) \times \R^d \to \R^d$ given by
$$JX(t,x) = \exp \Big( {\int_0^t \nabla \cdot b(s,X(s,x)) \, ds} \Big).$$
\end{definition}
Thanks to the compressibility condition (ii) in the definition of regular Lagrangian flow and to the local integrability of $\nabla \cdot b$, a computation like \eqref{eqn:buona-def-JX} shows that $JX$ is well defined and absolutely continuous in $[0,T]$ for a.e. $x\in \R^d$.

\smallskip

We present now a notion of solution of \eqref{eqn:cont-damp} which does not even require local integrability of $u$ and was first introduced in \cite{lions}. In the sequel all the functions involved will be defined up to a set of measure zero. 
%
\begin{definition}\label{defn:renorm-sol} Let $u_0: \R^d \to \R$ be a measurable  function, let $b\in L^1_{\rm loc} ((0,T) \times \R^d; \R^d)$ be a vector field such that $\nabla \cdot b\in L^1_{\rm loc} ((0,T) \times \R^d)$ and let $c\in L^1_{\rm loc} ((0,T) \times \R^d)$.
A measurable function $u:[0,T] \times \R^d \to \R$ is a renormalized solution of \eqref{eqn:cont-damp} if for every function $\beta:\R\to \R$ satisfying
\begin{equation}
\label{defn:beta}
\beta \in C^1\cap L^\infty(\R), \qquad \beta'(z) z \in L^\infty(\R), \qquad \beta(0)=0
\end{equation}
 we have that
 \begin{equation}
 \label{defn:renorm-sol-eq}
 \partial_t \beta (u) + \nabla \cdot (b\beta(u)) + \nabla \cdot b \big(u \beta'(u)-\beta(u)\big) = cu \beta'(u)
 \end{equation} in the sense of distributions, namely 
  for every $\phi \in C^{\infty}_c ( [0,T) \times \R^d)$
 \begin{equation}
 \label{defn:renorm-sol-tested-st}
 \begin{split}
&\hspace{-3em}\int_{\R^d} \phi(0,x) \beta (u_0)\, dx +
 \int_0^T \! \int_{\R^d} [\partial_t \phi + \nabla\phi \cdot b ]\beta(u) \, dx \, dt  \,+
  \\&\int_0^T \!\! \int_{\R^d} \!\phi \Big[ \nabla \cdot b \big(\beta(u)-u \beta'(u)\big) +  cu \beta'(u) \Big] \, dx \, dt =0.
  \end{split}
\end{equation}
 \end{definition}
The second assumption in \eqref{defn:beta} is exploited to give a distributional meaning to the right-hand side of \eqref{defn:renorm-sol-eq}, which becomes locally integrable despite the lack of integrability of $u$.

\begin{remark}\label{rmk:renorm-operativa}
In Definition~\ref{defn:renorm-sol}, we can equivalently test equation \eqref{defn:renorm-sol-eq} with compactly supported space functions $\varphi$; in other words, \eqref{defn:renorm-sol-tested-st} holds if and only if 
for every $\varphi \in C^{\infty}_c ( \R^d)$ the function $\int_{\R^d} \varphi(x) \beta (u(t,x))\, dx
$ coincides a.e. with an absolutely continuous function $\Gamma(t)$ such that
$\Gamma(0)=\int_{\R^d} \varphi(x) \beta (u_0(x))\, dx $
   and for a.e. $t\in [0,T]$
 \begin{equation}
 \label{defn:renorm-sol-tested}
 \begin{split}
 \frac{d}{dt} \Gamma(t)
 &= \int_{\R^d} \nabla\varphi \cdot b\beta(u) \, dx  +\int_{\R^d} \varphi(x) \Big[ \nabla \cdot b\big(\beta(u)-u \beta'(u)\big) +  cu \beta'(u) \Big] \, dx .
\end{split}
\end{equation}
This follows by the choice $\phi(t,x) = \varphi(x) \eta(t)$ in \eqref{defn:renorm-sol-tested-st} with $\eta \in C^\infty_c([0,T))$; by the density of the linear span of these functions in $C^\infty_c([0,T) \times \R^d)$, it is possibile to deduce the equivalence (see also \cite[Section 8.1]{amgisa}).
Notice moreover that, with a standard approximation argument, we are allowed to use every Lipschitz, compactly supported test function $\varphi: \R^d \to \R$ as a test function for the computation \eqref{defn:renorm-sol-tested}.
\end{remark}
The main result of this paper provides existence and uniqueness of renormalized solutions to the continuity equation with integrable, unbounded damping.
\begin{theorem}\label{thm:main}
Let $b\in L^1((0,T); BV_{\rm loc} (\R^d; \R^d))$ be a vector field that satisfies a bound on the divergence 
$\nabla \cdot b \in L^1((0,T); L^\infty(\R^d))$ and the growth condition
\begin{equation}
\label{eqn:growth-b}
\frac{|b(x)|}{1+|x|} \in L^1((0,T); L^1(\R^d))+ L^1((0,T); L^\infty(\R^d)).
\end{equation}
Let 
$$c\in L^1( (0,T) \times \R^d)$$
and let $u_0:\R^d \to \R$ be a measurable function.
Then there exists a unique renormalized solution $u:[0,T] \times \R^d \to \R$ of \eqref{eqn:cont-damp} starting from $u_0$ and it is given by the formula 
\begin{equation}\label{eqn:cont-damp-sol-thm}
u(t,x)= \frac{u_0(X^{-1}(t,\cdot)(x))}{JX(t, X^{-1}(t,\cdot)(x))} \exp\Big({\int_0^t c(\tau, X(\tau, X^{-1}(t,\cdot)(x))) \, d\tau}\Big).
\end{equation}
\end{theorem}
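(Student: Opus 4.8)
## Proof Plan

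The plan is to split the statement into an existence part and a uniqueness part, both of which rely on transferring information between the Eulerian picture (the PDE) and the Lagrangian picture (the flow $X$).

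For \textbf{existence}, I would verify directly that the function $u$ defined by \eqref{eqn:cont-damp-sol-thm} is a renormalized solution. The key reformulation is the pushforward identity \eqref{eqn:cont-damp-sol-pushforw}, which should be made rigorous using a change-of-variables lemma for the regular Lagrangian flow and its Jacobian (this is the role I expect of the announced Lemma~\ref{lemma:jac}). Then, for $\beta$ satisfying \eqref{defn:beta}, I would compute $\beta(u(t,\cdot))$ along the flow: setting $w(t,x) := u_0(x)\exp\bigl(\int_0^t c(\tau,X(\tau,x))\,d\tau\bigr)/JX(t,x)$, one has $u(t,\cdot) = X(t,\cdot)_\sharp(w(t,\cdot)\Leb d)$ in the appropriate sense, and $\beta(u)$ should satisfy a transport-type identity because $\beta(u(t,X(t,x)))$ evolves by an explicitly computable ODE in $t$ for a.e.\ $x$ (using that $JX$ solves $\partial_t JX = JX\,\nabla\cdot b(t,X)$ and the chain rule). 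Pairing with a test function $\varphi\in C^\infty_c(\R^d)$, differentiating in $t$, and changing variables back via Lemma~\ref{lemma:jac} should reproduce exactly \eqref{defn:renorm-sol-tested}. The boundedness of $\beta$ and of $z\mapsto z\beta'(z)$ is what makes every integral finite despite $u\notin L^1_{\rm loc}$; the term $cu\beta'(u)$ is controlled because $|u\beta'(u)|$ is bounded and the change of variables sends it to something dominated by $|c(\tau,X(\tau,x))|$, integrable by \eqref{eqn:buona-def-JX}. The compact support of $\varphi$ together with the growth condition \eqref{eqn:growth-b} handles the $\nabla\varphi\cdot b\,\beta(u)$ term.

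For \textbf{uniqueness}, by linearity it suffices to show that a renormalized solution $u$ with $u_0=0$ vanishes. The core is to justify rigorously the formal computation \eqref{eqn:ccs-strategy}: one wants to use $\beta_\delta(z) := \log(1+z^2/\delta)$-type renormalizations. Since $\log(1+z^2/\delta)$ is not bounded, the first step is a truncation/approximation argument: apply the renormalization property with admissible $\beta$'s of the form $\beta_{\delta,M}$ that agree with $\log(1+z^2/\delta)$ on $\{|z|\le M\}$ and are constant for $|z|$ large, then pass to the limit $M\to\infty$ using monotone convergence and the a priori control on $\int \log(1+u^2/\delta)$. A genuine subtlety, flagged already in the introduction, is that one must first establish that the difference of two renormalized solutions is itself renormalized — here, since $u_0=0$ and we deal with a single solution, this reduces to checking that $u$ being renormalized lets us run the estimate, but in the general two-solution setup one would need the stronger closure statement. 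The second ingredient is \emph{localization}: because $b$ only satisfies the growth condition \eqref{eqn:growth-b} and not global bounds, one cannot test with $\varphi\equiv 1$; instead I would test \eqref{defn:renorm-sol-tested} with a decaying weight $\varphi_R(x)$ (e.g.\ a smooth function behaving like $(1+|x|)^{-k}$ or a rescaled cutoff), absorb the resulting $\nabla\varphi_R\cdot b\,\beta(u)$ term using $|\nabla\varphi_R|\lesssim \varphi_R/(1+|x|)$ and \eqref{eqn:growth-b}, and then let $R\to\infty$.

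The \textbf{main obstacle} is the uniqueness argument, specifically making the logarithmic Gronwall estimate rigorous: one must simultaneously (a) legitimately plug an \emph{unbounded} renormalization $\log(1+u^2/\delta)$ into a theory built for bounded $\beta$, via careful truncation and limiting, and (b) control the spatial behaviour at infinity with a decaying test weight compatible with the mere growth condition \eqref{eqn:growth-b} on $b$, rather than the cutoff-plus-duality route of \cite{lions}. Once the estimate $\int_{\R^d}\log(1+u(t,x)^2/\delta)\,dx \le C$ with $C$ independent of $\delta$ is in hand, letting $\delta\to 0$ forces $u(t,\cdot)=0$ a.e., and combined with the explicit formula from the existence part this yields the theorem.
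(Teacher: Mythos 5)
Your existence half is essentially the paper's argument (differentiate $t\mapsto\varphi(X(t,x))\,\beta(u(t,X(t,x)))\,JX(t,x)$ using the Jacobian lemma and change variables back), so I only comment on uniqueness, where there are two genuine gaps. The first is the reduction to a single solution with zero datum: this is \emph{not} a consequence of linearity, because the renormalized formulation \eqref{defn:renorm-sol-eq} is nonlinear in $u$, so one must prove that the difference of two renormalized solutions is again a renormalized solution. This is one of the two main new points of the paper (Lemma~\ref{lemma:diff-renorm}): renormalize each solution with $\beta_M(r)=M\arctan(r/M)$, observe that $v_M=\beta_M(u_1)-\beta_M(u_2)$ is a \emph{bounded} distributional solution of a continuity equation with locally integrable right-hand side, apply Ambrosio's renormalization theorem to $v_M$, and pass $M\to\infty$ by dominated convergence, which hinges on the inequality $|r_1\beta_M'(r_1)-r_2\beta_M'(r_2)|\le|\beta_M(r_1)-\beta_M(r_2)|$ (Lemma~\ref{lemma:arctan}) to dominate the damping term. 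You acknowledge the issue but supply no argument and suggest it can be bypassed in the case at hand; it cannot, since uniqueness is precisely a two-solution statement.

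The second gap is the Gronwall endgame. Under \eqref{eqn:growth-b} write $|b(t,x)|/(1+|x|)\le b_1(t,x)+b_2(t)$ with $b_1\in L^1((0,T)\times\R^d)$ and $b_2\in L^1((0,T))$. In the localization error $\int\nabla\varphi_R\cdot b\,\beta(u)\,dx$ only the $b_2$ part can be absorbed into the Gronwall coefficient; the $b_1$ part is merely $L^1$ in $x$ and can only be estimated by $\|\beta\|_{L^\infty}\int_{\R^d\setminus B_R}b_1\,dx$, which forces the renormalization to be \emph{bounded} and leaves a term of size $C_R\log\bigl(1+\pi^2/(4\delta)\bigr)$ in the Gronwall bound (cf.\ \eqref{eqn:est-1}). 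Hence the resulting bound on $\int\varphi_R\,\beta_\delta(u)\,dx$ is \emph{not} independent of $\delta$, contrary to your final step; moreover there is no a priori finiteness of $\int\varphi_R\log(1+u^2/\delta)\,dx$ (the solution is merely measurable), so your monotone-convergence passage $M\to\infty$ has nothing to converge to, and the $M$-level Gronwall bounds themselves blow up with $M$. The paper avoids both problems by choosing the already bounded renormalization $\beta_\delta(r)=\log\bigl(1+\arctan(r)^2/\delta\bigr)$, which satisfies \eqref{defn:beta} with no truncation, and by concluding through a contradiction argument rather than a direct limit: if $u(t,\cdot)\neq0$ then $\Gamma_{\delta,R}(t)\ge 2^{-(d+1)}m\log(1+\gamma/\delta)$ for some $m,\gamma>0$; dividing by $\log(1+\gamma/\delta)$ and letting $\delta\to0$ gives $0<2^{-(d+1)}m\le e^A C_R$, which is contradicted by letting $R\to\infty$ since $C_R\to0$. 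Your truncated $\beta_{\delta,M}$ could be repaired by running this same contradiction at a fixed truncation level, but the chain ``let $M\to\infty$, obtain a $\delta$-uniform bound, let $\delta\to0$'' does not go through as written.
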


\begin{remark}
The same statement holds for vector fields $b$ satisfying other local regularity assumptions than $BV$; more precisely, Theorem~\ref{thm:main} holds for every $b$ such that every bounded, distributional solution of the continuity equation $\partial_t u + \nabla \cdot (bu) = f\in L^{1}_{\rm loc}$ is renormalized.
To see some classes of vector fields other than BV which satisfy this assumption, the interested reader may refer to \cite{amcri-edi} and to the references quoted therein.
\end{remark}

\section{Existence}\label{sec:ex}
To prove existence in Theorem~\ref{thm:main}, we show by explicit computation that  \eqref{eqn:cont-damp-sol} provides a renormalized solution to \eqref{eqn:cont-damp}.
In the case of a divergence-free vector field, the flow $X(t, \cdot)$ is measure preserving and \eqref{eqn:cont-damp-sol-thm} can be rewritten as
$$u(t,x)= {u_0(X^{-1}(t,\cdot)(x))}\exp \Big({\int_0^t c(\tau, X(\tau, X^{-1}(t,\cdot)(x))) \, d\tau} \Big).
$$
An easy computation shows that this function is a renormalized solution of \eqref{eqn:cont-damp}:
\begin{equation*}
\begin{split}
&\frac{d}{dt} \int_{\R^d} \varphi \beta(u) \, dx 
=
\frac{d}{dt} \int_{\R^d} \varphi(X) \beta(u(t,X)) \, dx 
\\
&=
\int_{\R^d} \Big[ \nabla \varphi(X)\cdot b(t,X) \beta(u(t,X)) 
+ \varphi(X)  \beta'(u(t,X)) u(t,X) c(t,X) \Big] \, dx 
\\
&=
\int_{\R^d} \Big[ \nabla \varphi \cdot b \beta(u) 
+ \varphi  \beta'(u) u c \Big] \, dx, 
\end{split}
\end{equation*}
(compare with \eqref{defn:renorm-sol-tested}). Note that it the above calculation is has been used that when the representation formula is considered along the flow it holds that  
$$
\frac{d}{dt}\left(u(t,X)\right)=u(t,X)c(t,X).$$
The computation can be made rigorous thanks to the absolute continuity of $X(\cdot, x)$.

The following lemma, regarding time regularity of the Jacobian of regular Lagrangian flows, is useful in the proof when $b$ is not divergence-free.
\begin{lemma}[Properties of the Jacobian]\label{lemma:jac}
Let $b$ as in Theorem~\ref{thm:main} and let $X$ be the regular Lagrangian flow of $b$. Then, the function $JX$ in Definition \ref{defn:jac} is in $L^1((0,T);L^\infty(\R^d))$ and for every $t>0$ and every $\phi\in L^1(\R^d)$ satisfies the following change of variable formula:
\begin{equation}
\label{eqn:c-d-var}
\int_{\R^d} \phi(X(t,x) ) JX(t,x) \, dx = \int_{\R^d} \phi(x) \, dx.
\end{equation}
Moreover, $e^{-L} \leq JX \leq e^L$ with $L= {\int_0^T \|\nabla \cdot b(t,\cdot) \|_{L^\infty(\R^d)} \, dt}$,  $JX (\cdot, x) $ and $JX^{-1}(\cdot,x )$ are absolutely continuous in $[0,T]$ for a.e. $x \in \R^d$ and satisfy
\begin{equation}
\label{eqn:JX}
\partial_t JX(t,x) = JX(t,x) \nabla \cdot b( t, X(t,x))
\qquad
\mbox{for a.e. } t\in (0,T)
,
\end{equation}
\begin{equation}
\label{eqn:JX-1}
\partial_t \left[\frac{1}{JX(t,x)}\right] = -\left(\frac{1}{JX(t,x)}\right)\nabla \cdot b( t, X(t,x))
\qquad \mbox{for a.e. } t\in (0,T)
.
\end{equation}
\end{lemma}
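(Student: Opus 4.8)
The plan is to establish the change of variables formula \eqref{eqn:c-d-var} first, then deduce the pointwise bounds and absolute continuity, and finally obtain the ODEs \eqref{eqn:JX} and \eqref{eqn:JX-1}.

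\textbf{Step 1: integrability and the change of variables formula.} The pointwise bound $e^{-L}\le JX(t,x)\le e^L$ for a.e.\ $x$ is immediate from Definition~\ref{defn:jac} and the estimate $\big|\int_0^t \nabla\cdot b(s,X(s,x))\,ds\big|\le L$, which holds because $|\nabla\cdot b(s,\cdot)|\le \|\nabla\cdot b(s,\cdot)\|_{L^\infty}$ and $\int_0^T\|\nabla\cdot b(s,\cdot)\|_{L^\infty}\,ds=L$; this also gives $JX\in L^1((0,T);L^\infty(\R^d))$. For \eqref{eqn:c-d-var}, I would argue by approximation: let $b_\eps$ be a smooth approximation of $b$ (standard mollification in space, keeping the divergence bound and growth condition) with regular Lagrangian flows $X_\eps$ that are genuine diffeomorphisms, so that $JX_\eps(t,x)=\det(\nabla_x X_\eps(t,x))$ satisfies the classical Liouville formula $\partial_t JX_\eps=JX_\eps\,\nabla\cdot b_\eps(t,X_\eps)$, hence $JX_\eps(t,x)=\exp\big(\int_0^t\nabla\cdot b_\eps(s,X_\eps(s,x))\,ds\big)$, and the usual change of variables $\int \phi(X_\eps(t,x))JX_\eps(t,x)\,dx=\int\phi(x)\,dx$ holds for $\phi\in C_c(\R^d)$. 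By the DiPerna--Lions/Ambrosio stability theory, $X_\eps\to X$ in $L^1_{\rm loc}$ (and a.e.\ along a subsequence), and one checks that $\int_0^t\nabla\cdot b_\eps(s,X_\eps(s,\cdot))\,ds\to\int_0^t\nabla\cdot b(s,X(s,\cdot))\,ds$ in $L^1_{\rm loc}$ using the equi-integrability coming from the uniform $L^1((0,T);L^\infty)$ bound on $\nabla\cdot b_\eps$ together with the uniform compressibility of the $X_\eps$; hence $JX_\eps\to JX$ in, say, $L^1_{\rm loc}$, and since the $JX_\eps$ are uniformly bounded this upgrades to $L^p_{\rm loc}$ for all $p<\infty$. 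Passing to the limit in the identity $\int\phi(X_\eps(t,x))JX_\eps(t,x)\,dx=\int\phi(x)\,dx$ for $\phi\in C_c(\R^d)$ gives \eqref{eqn:c-d-var} for continuous compactly supported $\phi$, and then for all $\phi\in L^1(\R^d)$ by density, using the $L^\infty$ bound on $JX$ and the compressibility bound $X(t,\cdot)_\#\Leb{d}\le C\Leb{d}$ to control the left-hand side.

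\textbf{Step 2: absolute continuity and the ODEs.} For a.e.\ $x$, the function $t\mapsto \int_0^t\nabla\cdot b(s,X(s,x))\,ds$ is absolutely continuous on $[0,T]$: indeed its total variation is bounded by $\int_0^T|\nabla\cdot b(s,X(s,x))|\,ds$, which is finite for a.e.\ $x$ by the computation analogous to \eqref{eqn:buona-def-JX} (using compressibility), and absolute continuity follows since it is an indefinite integral of an $L^1(0,T)$ function of $s$. Composing with the $C^1$ function $\exp$, which is Lipschitz on the bounded range $[-L,L]$, we get that $JX(\cdot,x)=\exp\big(\int_0^\cdot\nabla\cdot b(s,X(s,x))\,ds\big)$ is absolutely continuous, and by the chain rule for a.e.\ $t$
\[
\partial_t JX(t,x)=JX(t,x)\,\nabla\cdot b(t,X(t,x)),
\]
which is \eqref{eqn:JX}. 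Since $JX\ge e^{-L}>0$, the reciprocal $1/JX(\cdot,x)=\exp\big(-\int_0^\cdot\nabla\cdot b(s,X(s,x))\,ds\big)$ is likewise absolutely continuous with
\[
\partial_t\!\left[\frac{1}{JX(t,x)}\right]=-\left(\frac{1}{JX(t,x)}\right)\nabla\cdot b(t,X(t,x))
\]
for a.e.\ $t$, giving \eqref{eqn:JX-1}; the absolute continuity of $JX^{-1}(\cdot,x)$ follows the same way (or by applying the argument to the flow $X^{-1}$ of $-b$, using Remark~\ref{rmk:inv-compr}).

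\textbf{Main obstacle.} The routine parts are the pointwise bounds and the passage from \eqref{eqn:buona-def-JX}-type estimates to absolute continuity. The delicate point is the stability argument in Step~1: one must ensure that the approximating flows $X_\eps$ can be chosen so that $\nabla\cdot b_\eps(s,X_\eps(s,x))$ converges in an $L^1$-in-$(s,x)$ sense to $\nabla\cdot b(s,X(s,x))$, rather than merely a.e.\ or weakly; this hinges on an equi-integrability argument that exploits both the uniform bound on $\|\nabla\cdot b_\eps(s,\cdot)\|_{L^\infty}$ and the uniform compressibility constants, so that composition with $X_\eps$ does not destroy the convergence. Once this is secured, everything else is bookkeeping.
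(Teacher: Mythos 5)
Your strategy is the same as the paper's: mollify $b$, use the classical Liouville formula and change of variables for the smooth flows, pass to the limit using stability of regular Lagrangian flows, extend \eqref{eqn:c-d-var} from $C_c$ to $L^1$ by density, and obtain \eqref{eqn:JX}--\eqref{eqn:JX-1} by the chain rule for absolutely continuous functions. The pointwise bounds and your Step~2 are fine. The problem is exactly at the point you yourself flag as the main obstacle: you assert that $\nabla\cdot b_\eps(s,X_\eps(s,x))\to\nabla\cdot b(s,X(s,x))$ in $L^1_{\rm loc}$ follows from ``equi-integrability coming from the uniform $L^1((0,T);L^\infty)$ bound together with uniform compressibility'', and this is not a proof. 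Splitting the difference as $\bigl(\nabla\cdot b_\eps(s,X_\eps)-\nabla\cdot b(s,X_\eps)\bigr)+\bigl(\nabla\cdot b(s,X_\eps)-\nabla\cdot b(s,X)\bigr)$, compressibility plus the superlevel estimate do handle the first piece, but they say nothing about the second: $\nabla\cdot b(s,\cdot)$ is merely an $L^\infty$ function, so the a.e.\ convergence $X_\eps\to X$ gives no information whatsoever about the compositions (compose a rapidly oscillating bounded function with two nearby maps), and neither equi-integrability nor compressibility repairs this.

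The missing idea, which is the actual content of Step~2 of the paper's proof, is a continuity-off-a-small-set mechanism: by Egorov, $\nabla\cdot b^\eps\to\nabla\cdot b$ uniformly on $([0,T]\times B_R)\setminus E$ with $\Leb{d+1}(E)\le\eta$, so that $\nabla\cdot b$ is continuous there; the set of $x\in B_r$ for which $X^\eps(t,x)$ or $X(t,x)$ falls into $E^t\cup(\R^d\setminus B_R)$ has measure at most $2e^L\Leb{d}(E^t)+2g(R,r)$ by compressibility and the superlevel estimate \eqref{eqn:superlevels}; on the complement the integrand converges pointwise, while the error terms coming from $E$ and from $\R^d\setminus B_R$ are controlled, uniformly in $\eps$, through the $L^1$-in-time domination of $\|\nabla\cdot b^\eps(t,\cdot)\|_{L^\infty(\R^d)}$, letting first $\eps\to0$, then $R\to\infty$ and $\eta\to0$. (Equivalently, one can approximate $\nabla\cdot b$ locally in $L^1$ by continuous functions, i.e.\ a Lusin-type argument, and use convergence in measure of the flows.) Some such step is indispensable and is absent from your sketch; once it is supplied, the remainder of your argument — the limit in \eqref{eqn:cdvar-eps}, the density argument for $\phi\in L^1$, and the chain-rule derivation of \eqref{eqn:JX} and \eqref{eqn:JX-1} — coincides with the paper's.
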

\begin{proof}
{\it Step 1: approximation with smooth vector fields.} Let us approximate the vector field $b$ by convolution. In particular let $b^\eps$ be the convolution between $b$, extended to $0$ in $(\R\setminus [0,T]) \times \R^d$, and a kernel of the form $\eps^{-d-1} \rho_1(t/\eps) \rho_2(x/\eps)$, where $\rho_1\in C^\infty_c(\R)$ and $\rho_2\in C^\infty_c(\R^d)$ are standard convolution kernels,
so that
\begin{equation}
\label{eqn:div-eps}
\| \nabla \cdot b^\eps(t, \cdot) \|_{L^\infty(\R^d)} \leq 
\int_\R \rho_1(t' )\| \nabla \cdot b(t- \eps t', \cdot) \|_{L^\infty(\R^d)} \, dt'.
\end{equation}
Let $X^\eps\in C^\infty([0,T] \times \R^d ; \R^d)$ be the flow of $b^\eps$; for every $t>0$ the function $X^\eps(t,\cdot)$ is a diffeomorphism of $\R^d$ and, setting $JX^\eps(t,x) = \det \nabla_x X^\eps(t,x)$, we have the change of variable formula
\begin{equation}
\label{eqn:cdvar-eps}
\int_{\R^d} \phi(X^\eps(t,x) ) JX^\eps(t,x) \, dx = \int_{\R^d} \phi(x) \, dx  \qquad \forall \phi \in C_c(\R^d).
\end{equation}
Moreover for every $x\in \R^d$ the function $JX^\eps(\cdot,x)$ solves the ODE
$$\begin{cases}
\partial_t JX^\eps(t,x) = JX^\eps (t,x) \nabla \cdot b^\eps( t, X^\eps(t,x))
\qquad&
\mbox{for any } t\in (0,T)
\\
JX^\eps(0,x) = x,&
\end{cases}$$
hence it is given by the expression
$$JX^\eps (t,x) = \exp \Big( {\int_0^t \nabla \cdot b^\eps(s,X^\eps(s,x)) \, ds} \Big) \qquad \forall (t,x) \in [0,T] \times \R^d.$$
Integrating \eqref{eqn:div-eps} in $(0,T)$, we find that 
\begin{equation}
\label{eqn:eL}
e^{-L} \leq JX^\eps \leq e^L \qquad \forall(t,x) \in [0,T] \times \R^d \qquad \mbox{with }L= {\int_0^T \|\nabla \cdot b(t,\cdot) \|_{L^\infty(\R^d)} \, dt}.
\end{equation}

{\it Step 2: pointwise convergence of Jacobians.}
We show that, up to a subsequence (not relabeled) in $\eps$, for a.e. $x\in \R^d$
\begin{equation}\label{eq:J}
\lim_{\eps\to 0} JX^\eps (t,x) = JX(t,x) \qquad
\mbox{for every } t\in (0,T),
\end{equation}where $JX$ is defined in Definition~\ref{defn:jac}.

To this end, let us first prove that, up to a subsequence (not relabeled),
\begin{equation}
\label{eqn:conv-bX}
\lim_{\eps \to 0} \nabla \cdot b^\eps(t,X^\eps(t,x)) = \nabla \cdot b(t,X(t,x))
\qquad \mbox{in }L^1_{\rm loc}([0,T] \times \R^d).
\end{equation}
By the stability of regular Lagrangian flows (see \cite{ambrosio,lions,crippade1} or \cite[Section 5]{amcri-edi}), for every $t\in [0,T]$
we have that, up to a subsequence (not relabelled)
\begin{equation}
\label{eqn:conv-X}
\lim_{\eps \to 0}X^\eps(t,x) = X(t,x)
\qquad \mbox{pointwise for a.e. }(t,x)\in [0,T] \times \R^d.
\end{equation}

Let us consider $r>0$ and let us prove the convergence in \eqref{eqn:conv-bX} in $[0,T] \times B_r$. Let $R>0$ and $\eta>0$ to be chosen later. The estimate on superlevels in \cite[Proposition 3.2]{crippade1}, which depends on the growth assumptions \eqref{eqn:eL} and on the compressibility of the flows, implies that
\begin{equation}
\label{eqn:superlevels}
\Leb{d}(\{x \in B_r : X^\e(t, x) \in \R^d \setminus B_R\}) \leq g(R,r),
\end{equation}¥ for a function $g(R,r)$ which converges to $0$ as $R \to \infty$ for every $r>0$ (and it is independent on $\e$ and $t$). The analogous of \eqref{eqn:superlevels} holds also with $X$ in place of $X^\eps$.

By Egorov theorem, there exists a measurable set $E\subseteq [0,T] \times B_R$ of small measure $\Leb{d+1}(E) \leq \eta$ such that 
\begin{equation}\label{eq:egorov}
\lim_{\eps \to 0 }\nabla \cdot b^\eps=\nabla \cdot b  \qquad \mbox{uniformly in } \big( [0,T] \times B_R \big) \setminus E.
\end{equation}
As a consequence, $ \nabla \cdot b(t, \cdot)$ is continuous on $ \big([0,T] \times B_R \big)\setminus E$. Let us consider $E^t$ to be the intersection of $E$ with $\{t\} \times \R^d$. Letting
$$E^t_{\eps, R} = \{x \in B_r: X^\eps(t,x) \in E^t \cup (\R^d \setminus B_R)\} \cup  \{x \in B_r:  X(t,x) \in E^t\cup (\R^d \setminus B_R)\},$$
we have that
\begin{equation}\label{eqn:divbX-conv}
\begin{aligned}
\int_0^T\int_{B_r} |\nabla \cdot b^\eps(t,X^\eps) - \nabla \cdot b(t,X) | \, dx \, dt  &\leq
\int_0^T\int_{B_r \setminus E^t_{\eps,R}} |\nabla \cdot b^\eps(t,X^\eps) - \nabla \cdot b(t,X) | \, dx \, dt
\\
&+
\int_0^T \Leb{d}(E^t_{\eps,R}) \|\nabla \cdot b^\eps (t, \cdot)\|_{L^\infty(\R^d)}\,dt\\
&+
\int_0^T \Leb{d}(E^t_{\eps,R}) + \|\nabla \cdot b (t, \cdot)\|_{L^\infty(\R^d)}) \, dt
\end{aligned}
\end{equation}
The second and the third term in the right-hand side of \eqref{eqn:divbX-conv} can be estimated uniformly in $\eps$ thanks to the compressibility of $X^\eps(t,\cdot)$ and $X(t,\cdot)$, which is less or equal, in both cases, than $ e^L$ thanks to \eqref{eqn:eL} and Remark~\ref{rmk:inv-compr}. More precisely
\begin{equation*}
\begin{aligned}
\Leb{d}(\{ X^\eps(t,\cdot) \in E^t \cup (\R^d \setminus B_R)\} )
&\leq
\Leb{d}(\{ X^\eps(t,\cdot) \in E^t \} )
+
\Leb{d}(\{ X^\eps(t,\cdot) \in \R^d \setminus B_R\} )
\\
& \leq e^L \Leb{d}(E^t)+ g(R,r) 
\end{aligned}
\end{equation*}
and a similar computation holds for the set $\{ X(t,\cdot) \in E^t \cup (\R^d \setminus B_R)\}$, so that overall
$$\Leb{d}(E^t_{\eps, R}) \leq 2e^L \Leb{d}(E^t) + 2g(R,r).$$

Thanks to 
\eqref{eqn:divbX-conv}, it implies that
\begin{equation*}
\begin{split}
\hspace{-1em}\int_0^T\int_{B_r} |\nabla \cdot b^\eps(t,X^\eps) - \nabla \cdot b(t,X) | \, dx \, dt  &
\leq
\int_0^T\int_{B_r \setminus E^t_{\eps,R}} |\nabla \cdot b^\eps(t,X^\eps)- \nabla \cdot b(t,X) | \, dx \, dt
\\
&+ 2
e^L \int_0^T \Leb{d}(E_t) \|\nabla \cdot b (t, \cdot)\|_{L^\infty(\R^d)}\,dt\\
&+2 e^L\int_0^T\Leb{d}(E_t) \|\nabla \cdot b^{\eps} (t, \cdot)\|_{L^\infty(\R^d)} \, dt\\
&+4
 g(R,r) \int_0^T \|\nabla \cdot b (t, \cdot)\|_{L^\infty(\R^d)} \, dt
 \end{split}
 \end{equation*}
 which can be written as follows:
 \begin{equation}
\label{eqn:divbX-conv-new}
\begin{split}
\int_0^T\int_{B_r} |\nabla \cdot b^\eps(t,X^\eps) - \nabla \cdot b(t,X) | \, dx \, dt &\leq
 \int_0^T\int_{B_r \setminus E^t_{\eps,R}} |\nabla \cdot b^\eps(t,X^\eps) - \nabla \cdot b(t,X) | \, dx \, dt\\
 &+4
 g(R,r) \int_0^T \|\nabla \cdot b (t, \cdot)\|_{L^\infty(\R^d)} \, dt\\
&+ 2
e^L \int_{E} \|\nabla \cdot b (t, \cdot)\|_{L^\infty(\R^d)}\,dx\,dt\\
&+2 e^L\int_{E} \|\nabla \cdot b^{\eps} (t, \cdot)\|_{L^\infty(\R^d)}\,dx \, dt.
\end{split}
\end{equation}
The first term in \eqref{eqn:divbX-conv-new} converges to $0$ as $\eps \to 0$ because $\nabla \cdot b^\eps(t,X^\eps)$ converges pointwise to $\nabla \cdot b(t,X)$ in $B_r \setminus E^t_{\eps,R}$ and $\nabla \cdot b$ is continuous on $E^t$:
\begin{equation}
\begin{aligned}
&|\nabla \cdot b^\eps(t,X^\eps) - \nabla \cdot b(t,X) |
\leq
|\nabla \cdot b^\eps(t,X^\eps) - \nabla \cdot b(t,X^\eps) |
+|\nabla \cdot b(t,X^\eps) - \nabla \cdot b(t,X) |
\\
&\hspace{10em}\leq
\|\nabla \cdot b^\eps(t,\cdot) - \nabla \cdot b(t,\cdot) \|_{L^\infty(B_R \setminus E^t)}
+|\nabla \cdot b(t,X^\eps) - \nabla \cdot b(t,X) |.
\end{aligned}
\end{equation}
The second term goes to $0$ because $g(R,r)\to 0$ for $R\to\infty$. The last terms, in turn, converge to $0$ as $\eta \to 0$, where $\eta$ has been chosen in \eqref{eq:egorov} and is independently on $\eps$, by the absolute continuity of the Lebesgue integral.
Indeed, each function is dominated by 
 $$t \mapsto \|\nabla \cdot b^\eps (t, \cdot)\|_{L^\infty(\R^d)} + \|\nabla \cdot b (t, \cdot)\|_{L^\infty(\R^d)} \leq (\eps^{-1}\rho_1(\cdot/\eps)) \ast \|\nabla \cdot b (t, \cdot)\|_{L^\infty(\R^d)} + \|\nabla \cdot b (t, \cdot)\|_{L^\infty(\R^d)}$$
and the last function converges in $L^1([0,T])$ to  $2\|\nabla \cdot b (t, \cdot)\|_{L^\infty(\R^d)}$, so that we can take the limit in the right-hand side of \eqref{eqn:divbX-conv-new} by the absolute continuity of the Lebesgue integral. 
Finally, choosing first $R$ and $\eta$ small enough, and then letting $\eps$ go to $0$  in \eqref{eqn:divbX-conv-new}, we find \eqref{eqn:conv-bX}.
By \eqref{eqn:conv-bX}, up to a subsequence, for a.e. $x\in \R^d$, $ \nabla \cdot b^\eps(t,X^\eps(t,x))$ converges to $\nabla \cdot b(t,X(t,x))$ in $L^1([0,T])$. Hence for a.e. $x$ we deduce \eqref{eq:J}. 

{\it Step 3: conclusion.}
Let us fix $t>0$ and $\phi \in C_c(B_R)$ with $R>0$. We take the limit as $\eps$ goes to $0$ in \eqref{eqn:cdvar-eps} to get \eqref{eqn:c-d-var}. 
More precisely, to show that the limit of \eqref{eqn:cdvar-eps} is \eqref{eqn:c-d-var} we estimate the difference of the two terms by adding and subtracting $\phi(X)JX^{\e}$ and using the bound on $JX^\e$ given by \eqref{eqn:eL}
\begin{equation*}
\left|\int_{\R^d}(\phi(X^{\e})JX^{\e} -\phi(X)JX) \, dx \right|\leq
\int_{\R^d}|\phi(X)| |JX^{\e}-JX| \, dx+ e^L\int_{\R^d}|\phi(X^{\e})-\phi(X)| \, dx.
\end{equation*}
The first term goes to $0$ as $\e \to 0$ by \eqref{eq:J} and the dominated convergence theorem, since the functions are nonzero only on the set $\{ x: X(t,x) \in B_R\}$ and this set has finite measure.

Regarding the second term, for every $\tilde R>0$ we have
\begin{equation*}
\begin{aligned}
\int_{\R^d}|\phi(X^{\e})-\phi(X)| \, dx &\leq 2 \|\phi \|_{L^\infty} \Leb{d}( \{ x\notin B_{\tilde R}: X(t,x) \in B_R \mbox{ or } X^\e(t,x) \in B_R\})\\ 
&+ \int_{B_{\tilde R}}|\phi(X^{\e})-\phi(X)| \, dx.
\end{aligned}
\end{equation*}
By choosing $\tilde R$ sufficiently big, the first term can be made as small as we want independently on $\e$ thanks to the estimate on superlevels in \cite[Proposition 3.2]{crippade1} (see also \eqref{eqn:superlevels}). Finally, letting $\e \to 0$ in the second term with $\tilde R$ fixed, we obtain that it converges to $0$ by dominated convergence.
%
%
Hence, \eqref{eqn:c-d-var} holds true for every $\phi \in C_c(\R^d)$. Then we approximate every $\phi\in L^1(\R^d)$ with compactly supported, continuous functions $\{\phi_n\}_{n\in \N}$ and we take the limit in \eqref{eqn:c-d-var} applied to $\phi_n$.
The left-hand side converges thanks to the bound on the Jacobian and to the bounded compressibility of $X$:
$$\Big| \int_{\R^d}(\phi_n(X)-\phi(X)) JX \, dx \Big|
\leq e^L \int_{\R^d}|\phi_n(X)-\phi(X)| \, dx
\leq C e^L \int_{\R^d}|\phi_n-\phi| \, dx,
$$
hence we obtain \eqref{eqn:c-d-var} with $\phi$.

Finally, \eqref{eqn:JX} and \eqref{eqn:JX-1} are easily checked by direct computation and using the fact that $JX$ is absolutely continuos in the time variable.
\end{proof}

\begin{proof}[Proof of Theorem~\ref{thm:main}, Existence]
Let $\beta: \R \to \R$ be a function satisfying \eqref{defn:beta}.
From the expression \eqref{eqn:cont-damp-sol} we compute an equation involving $\beta(u(t,x))$. 
Let $\varphi\in C^\infty_c$ be a test function. By the change of variable formula \eqref{eqn:c-d-var} applied with $\phi (x)= \varphi(x) \beta(u(t,x)) $  we have that
\begin{equation*}
\int_{\R^d} \varphi(x) \beta(u(t,x)) \, dx 
=
 \int_{\R^d} \varphi(X(t,x)) \beta(u(t,X(t,x))) JX(t,x) \, dx .
\end{equation*}

Thanks to the absolute continuity of $X(\cdot, x)$, of  $JX (\cdot, x) $, and of $1/JX(\cdot,x )$ and since the set of bounded, absolutely continuous functions is an algebra, for every $x\in \R^d$ the function $t \to  \varphi(X(t,x)) \beta(u(t,X(t,x))) JX(t,x) $ is absolutely continuous. Its derivative can be computed by the explicit formula for $u$ given in \eqref{eqn:cont-damp-sol-thm} thanks to \eqref{eqn:JX} and \eqref{eqn:JX-1}: for a.e. $s\in [0,T]$
\begin{equation*}
\begin{split}
 &\partial_s \big[ \varphi(X(s,x)) \beta(u(s,X(s,x))) JX(s,x) \big]=
 \nabla \varphi(X)\cdot b(s,X) \beta(u(s,X)) JX
\\
&+ \varphi(X)  \beta'(u(s,X)) \partial_s\Big[\frac{u_0}{JX}\exp\Big({\int_0^t c(\tau, X(\tau)) \, d\tau}\Big) \Big] 
JX+\varphi(X) \beta(u(s,X)) \partial_sJX
\\
&=
 \nabla \varphi(X)\cdot b(s,X) \beta(u(s,X)) JX+ \varphi(X)  \beta'(u(s,X)) \frac{u_0}{JX}\exp\Big({\int_0^t c(\tau, X(\tau)) \, d\tau}\Big)c(s,X)JX
\\
&+
\varphi(X)  \beta'(u(s,X)) \partial_s \Big[\frac{1}{JX}\Big] {u_0} \exp\Big({\int_0^t c(\tau, X(\tau) \, d\tau}\Big) JX+\varphi(X) \beta(u(s,X)) \partial_sJX
\\
&=
 \nabla \varphi(X)\cdot b(s,X) \beta(u(s,X)) JX+ \varphi(X)  \beta'(u(s,X)) u(s,X) c(s,X) JX
\\
&+
\varphi(X)  \beta'(u(s,X)) u(s,X) \partial_s \Big[\frac{1}{JX}\Big] JX^2+\varphi(X) \beta(u(s,X)) \partial_sJX
\\
& =\big[ \nabla \varphi(X)\cdot b(s,X) \beta(u(s,X))+ \varphi(X)  \beta'(u(s,X)) u(s,X) c(s,X)
\\
&-\varphi(X)  \beta'(u(s,X)) u(s,X) \nabla \cdot b (s,X)+\varphi(X) \beta(u(s,X))  \nabla \cdot b (s,X) \big]JX
\end{split}
\end{equation*}
(for the sake of brevity we write $X$ in place of $X(s,x)$ and $JX$ in place of $JX(s,x)$).
Hence, by Fubini theorem and by the change of variable \eqref{eqn:c-d-var}, we have that
\begin{equation*}
\begin{split}
&\int_{\R^d} \varphi(x) \beta(u(t,x)) \, dx-\int_{\R^d}\varphi(x)\beta(u_0(x))\,dx\\
&=
\int_{\R^d} \int_0^t \partial_s \big[ \varphi(X(s,x)) \beta(u(s,X(s,x))) JX(s,x) \big]\, ds \, dx 
\\
&=
\int_0^t
\int_{\R^d} \Big[ \nabla \varphi(X)\cdot b(s,X) \beta(u(s,X))
+ \varphi(X)  \beta'(u(s,X)) u(s,X) c(s,X)
\\
&-\varphi(X)  \beta'(u(s,X)) u(s,X) \nabla \cdot b (s,X) +\varphi(X) \beta(u(s,X))  \nabla \cdot b (s,X) \Big] JX
 \, dx \, ds
 \\
&=\int_0^t\int_{\R^d} \Big[\nabla\varphi(x)\cdot b(s,x)\beta(u(s,x)) + \varphi(x)c(s,x)u(s,x) \beta'(u(s,x))
\\
&
+ \varphi(x)\nabla \cdot b(s,x) \big(-u(s,x) \beta'(u(s,x))+\beta(u(s,x))\big) \Big] \, dx \, ds.
\end{split}
\end{equation*}
Notice that the integrand in the right-hand side is in $L^1((0,T)\times \R^d))$ thanks to the properties of $\beta$ and since $\varphi$ is compactly supported.
We have therefore verified that the function $t \to \int_{\R^d} \varphi(x) \beta(u(t,x)) \, dx $ is absolutely continuous in $[0,T]$ and that \eqref{defn:renorm-sol-tested} holds; we conclude that $u$ is a renormalized solution thanks to Remark~\ref{rmk:renorm-operativa}.
\end{proof}

\section{Uniqueness}\label{sec:un}
In this section we are going to prove the uniqueness part of Theorem~\ref{thm:main}. In Lemma~\ref{lemma:diff-renorm} we prove that under our assumptions the difference of renormalized solutions is still a renormalized solution following the lines of  \cite[Lemma II.2]{lions}. Therefore, to prove uniqueness in Theorem~\ref{thm:main} it is enough to show that every renormalized solution starting from $u_0=0$ is identically $0$. 
The following simple lemma states the property of the particular renormalization function which allows to pass to the limit in the damping term.

\begin{lemma}\label{lemma:arctan}
Let $\beta(r) = \arctan (r): \R \to (-\pi/2, \pi/2)$ and, for every $M>0$, let $\beta_M(r) = M\beta(r/M)$. Then we have that
\begin{equation}
\label{est:arctan}
|r_1 \beta'_M(r_1)- r_2 \beta'_M(r_2)| \leq | \beta_M(r_1)- \beta_M(r_2)| \qquad \forall r_1, r_2 \in \R.
\end{equation}
\end{lemma}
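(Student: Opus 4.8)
The plan is to reduce the claimed inequality \eqref{est:arctan} to a one-variable statement about the function $h(r) := r\beta_M'(r)$ and then to exploit the monotonicity structure of $\beta_M$. First I would record the scaling reduction: since $\beta_M(r) = M\arctan(r/M)$ we have $\beta_M'(r) = 1/(1+r^2/M^2)$, so writing $s_i = r_i/M$ both sides of \eqref{est:arctan} scale by $M$, and it suffices to prove the case $M=1$, i.e. $|h(s_1)-h(s_2)| \le |\arctan s_1 - \arctan s_2|$ where $h(s) = s/(1+s^2)$. So the whole statement boils down to comparing the increments of $h(s)=s/(1+s^2)$ and of $\arctan$.

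The key computation is then to compare derivatives: $h'(s) = (1-s^2)/(1+s^2)^2$ while $(\arctan)'(s) = 1/(1+s^2)$. I would like to conclude from $|h'(s)| \le (\arctan)'(s)$ for all $s$, which indeed holds since $|1-s^2| \le 1+s^2$ gives $|1-s^2|/(1+s^2)^2 \le 1/(1+s^2)$. However, this pointwise derivative bound does not immediately yield the increment bound, because $h$ is not monotone (it increases on $(-1,1)$ and decreases outside), so one cannot simply integrate $|h'|$ against the signed increment of $\arctan$ without care. The clean way around this is: for $s_1 < s_2$,
\begin{equation*}
|h(s_2) - h(s_1)| = \Big| \int_{s_1}^{s_2} h'(s)\,ds \Big| \le \int_{s_1}^{s_2} |h'(s)|\,ds \le \int_{s_1}^{s_2} \frac{1}{1+s^2}\,ds = \arctan s_2 - \arctan s_1 = |\arctan s_2 - \arctan s_1|,
\end{equation*}
where in the last step I use that $\arctan$ is increasing so its signed increment over $[s_1,s_2]$ is nonnegative and equals its absolute value. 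This chain is valid precisely because we bound $|h'|$ by the \emph{nonnegative} density $1/(1+s^2)$ and then integrate that density, which reconstitutes the (monotone) increment of $\arctan$; the non-monotonicity of $h$ is harmlessly absorbed into the first inequality. Undoing the scaling gives \eqref{est:arctan} for general $M>0$.

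The only mild obstacle is the bookkeeping just described — resisting the temptation to argue via a Lipschitz-type comparison that would (incorrectly) need $h$ to be monotone, and instead routing the estimate through the pointwise bound $|h'| \le 1/(1+s^2)$ on derivatives. Everything else is elementary algebra ($|1-s^2|\le 1+s^2$) and the fundamental theorem of calculus, both of which I would carry out directly without further comment.
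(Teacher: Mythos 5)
Your proof is correct, and it takes a slightly different (though closely related) route from the paper's. The paper proves the $M=1$ case by the substitution $t_i=\arctan(r_i)$, which turns the left-hand side into $|\sin(2t_1)/2-\sin(2t_2)/2|$ and reduces the claim to the fact that $t\mapsto\sin(2t)/2$ is Lipschitz with constant $1$ on $(-\pi/2,\pi/2)$; the rescaling to general $M>0$ is then carried out exactly as you do. Your argument instead stays in the original variable: you bound $|h'(s)|=|1-s^2|/(1+s^2)^2\le 1/(1+s^2)=\arctan'(s)$ and integrate via the fundamental theorem of calculus. This is the chain-rule translation of the same estimate (the $t$-derivative of $\sin(2t)/2$ equals $h'(\tan t)(1+\tan^2 t)$), so the two proofs are equivalent in substance; the paper's is a one-line trigonometric identity once the substitution is made, while yours avoids trigonometry entirely at the cost of an integration step. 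Your explicit remark that the non-monotonicity of $h$ is harmless because one passes to $\int_{s_1}^{s_2}|h'(s)|\,ds$, which is dominated by the increment of the monotone function $\arctan$, correctly handles the only point where care is needed, and the scaling reduction ($r\beta_M'(r)=Mh(r/M)$, $\beta_M(r)=M\arctan(r/M)$) is accurate.
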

\begin{proof}
First we prove the inequality for $M=1$, namely 
\begin{equation}
\label{est:arctan-1}
\Big|\frac{r_1}{1+r_1^2}- \frac{r_2}{1+r_2^2} \Big| \leq | \arctan(r_1)- \arctan(r_2)| \qquad \forall r_1, r_2 \in \R.
\end{equation}
Setting $t_i = \arctan(r_i)$, $i=1,2$, the inequality is equivalent to
$$\Big|\frac{\tan(t_1)}{1+\tan^2(t_1)}- \frac{\tan(t_2)}{1+\tan^2(t_2)} \Big| \leq | t_1- t_2| \qquad \forall t_1,t_2 \in \Big( -\frac{\pi}{2}, \frac{\pi}{2}\Big).$$
Since the left-hand side can be rewritten as $|\sin(2t_1)/2- \sin(2t_2)/2|$ and the function $\sin(2t)/2$ is Lipschitz with constant $1$, the previous inequality is satisfied.
To prove \eqref{est:arctan} with $M>0$, we apply \eqref{est:arctan-1} at $r_1/M$ and $r_2/M$ to obtain
$$
\Big|\frac{Mr_1}{M^2+r_1^2}- \frac{Mr_2}{M^2+r_2^2}\Big| \leq \Big| \arctan\Big(\frac{r_1}{M}\Big)- \arctan\Big(\frac{r_2}{M}\Big)\Big| \qquad \forall r_1, r_2 \in \R.
$$
Multiplying both sides by $M$ we obtain \eqref{est:arctan}.
\end{proof}

\begin{lemma}\label{lemma:diff-renorm}
Let $b\in L^1((0,T); BV_{\rm loc} (\R^d; \R^d))$ be a vector field with $\nabla \cdot b \in L^1((0,T); L^1_{\rm loc}(\R^d))$. Let $c\in L^1_{\rm loc}( (0,T) \times \R^d)$
and let $u_0:\R^d \to \R$ be a measurable function.
Let $u_1$ and $u_2$ be renormalized solutions of \eqref{eqn:cont-damp} with initial datum $u_0$.

Then $u:= u_1-u_2$ is a renormalized solution with initial datum $0$.
\end{lemma}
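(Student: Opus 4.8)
The plan is to show that $u = u_1 - u_2$ is renormalized by exploiting the fact that each $u_i$ is renormalized and that the equation is linear, but — as the authors warn — linearity alone is not enough, since $\beta(u_1 - u_2)$ is not a linear combination of renormalizations of $u_1$ and $u_2$. Instead I would follow the DiPerna--Lions double-variable / commutator idea adapted to this setting: regularize in space by convolution with a smooth kernel $\rho_\eps$, write equations for $u_i^\eps := u_i * \rho_\eps$, subtract, and control the commutator $[b, \rho_\eps *](u_1 - u_2)$. The key point is that the renormalization property for each $u_i$ already encodes that the relevant commutators vanish; concretely, since $u_i$ is renormalized, $\beta(u_i)$ solves \eqref{defn:renorm-sol-eq}, and one can then pass through smooth renormalizers of $\beta(u_i^\eps)$.

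More precisely, here are the steps I would carry out. First, fix $\beta$ as in \eqref{defn:beta} and also a second smooth bounded function; the goal is an equation for $\beta(u)$. Regularize: let $u_i^\eps = u_i(t,\cdot) * \rho_\eps$. Since $u_i$ is a renormalized (hence distributional, after truncation) solution, $u_i^\eps$ solves $\partial_t u_i^\eps + \nabla \cdot (b u_i^\eps) = c u_i^\eps + r_i^\eps$ where $r_i^\eps = \nabla \cdot (b u_i^\eps) - (\nabla \cdot (b u_i)) * \rho_\eps + (c u_i)*\rho_\eps - c u_i^\eps$ is the commutator-type remainder; by the DiPerna--Lions commutator lemma (valid because $b \in BV_{\rm loc}$ with $\nabla \cdot b \in L^1_{\rm loc}$) plus the integrability of $c$, one gets $r_i^\eps \to 0$ in $L^1_{\rm loc}$. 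Now $u^\eps := u_1^\eps - u_2^\eps$ is smooth in $x$ and solves $\partial_t u^\eps + \nabla \cdot (b u^\eps) = c u^\eps + (r_1^\eps - r_2^\eps)$; applying the chain rule to the smooth function $u^\eps$ gives, for any $\beta$ as in \eqref{defn:beta},
\begin{equation*}
\partial_t \beta(u^\eps) + \nabla \cdot (b \beta(u^\eps)) + \nabla \cdot b\,(u^\eps \beta'(u^\eps) - \beta(u^\eps)) = c u^\eps \beta'(u^\eps) + \beta'(u^\eps)(r_1^\eps - r_2^\eps)
\end{equation*}
in the sense of distributions. Finally, let $\eps \to 0$: since $\beta, \beta'$ and $z \mapsto z\beta'(z)$ are bounded and continuous, $u^\eps \to u$ in $L^1_{\rm loc}$ (and a.e. along a subsequence) so each nonlinear term converges by dominated convergence against a test function, and the remainder term is controlled by $\|\beta'\|_\infty \|r_1^\eps - r_2^\eps\|_{L^1(\text{supp}\,\phi)} \to 0$. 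This yields \eqref{defn:renorm-sol-tested-st} for $u$ with initial datum $\beta(0) = 0$, so $u$ is renormalized.

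The main obstacle is justifying that $r_i^\eps \to 0$ in $L^1_{\rm loc}$ in the present framework, i.e.\ establishing the commutator estimate when one only knows that $u_i$ is renormalized (a priori not even locally integrable) rather than a bona fide $L^1_{\rm loc}$ solution. The resolution is that one never works with $u_i$ directly but with $\gamma(u_i)$ for a suitable bounded truncation $\gamma$ (e.g.\ $\gamma$ itself of the form in \eqref{defn:beta}), which is renormalized by hypothesis and is bounded, so the classical DiPerna--Lions commutator lemma for bounded solutions applies to $\gamma(u_i)$; one then lets the truncation parameter go to infinity, using that $\beta$ only sees the truncated variable once $\beta$ itself is bounded, exactly as in \cite[Lemma II.2]{lions}. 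A secondary technical point is handling the damping: the term $c u_i^\eps$ must be compared with $(c u_i)*\rho_\eps$, which requires $c \in L^1_{\rm loc}$ and the same truncation to make $u_i$ effectively bounded; with those in place this contributes an additional easy piece to $r_i^\eps$ that converges to zero by standard properties of mollification. Once the commutator is controlled, the rest is the routine passage to the limit sketched above.
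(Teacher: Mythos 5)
Your overall skeleton (truncate each $u_i$, obtain an equation for the bounded difference, renormalize it, then remove the truncation) is the same as the paper's, but two points in your write-up do not hold as stated. The lesser one: the ``classical DiPerna--Lions commutator lemma'' is not valid for $b$ merely in $BV_{\rm loc}$ --- with isotropic mollification the commutator need not converge strongly when $Db$ has a singular part, and what is actually needed is Ambrosio's renormalization theorem for \emph{bounded} distributional solutions of the continuity equation with $L^1_{\rm loc}$ right-hand side (this is exactly what the paper invokes). Moreover it must be applied to the \emph{difference} $v_M=\beta_M(u_1)-\beta_M(u_2)$, which after subtracting the two equations is a bounded distributional solution with locally integrable right-hand side; applying it (or the hypothesis) to each $\gamma(u_i)$ separately gives nothing new, since each $u_i$ is renormalized by assumption and the whole issue is the nonlinearity of $\beta$ on the difference. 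This part of your argument is repairable by citing the right theorem rather than re-proving a commutator estimate that fails for $BV$.

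The genuine gap is the removal of the truncation, which you dismiss with ``exactly as in Lions, Lemma II.2.'' Once $v_M$ is renormalized by some $\gamma$ as in \eqref{defn:beta}, the right-hand side of the resulting equation contains the term $(c-\nabla\cdot b)\,\gamma'(v_M)v_M\,\bigl[u_1\beta_M'(u_1)-u_2\beta_M'(u_2)\bigr]/\bigl[\beta_M(u_1)-\beta_M(u_2)\bigr]$. Since $c$ is only in $L^1_{\rm loc}$ and $u_1,u_2$ need not be locally integrable, passing to the limit $M\to\infty$ by dominated convergence requires a bound on $u_1\beta_M'(u_1)-u_2\beta_M'(u_2)$ that is uniform in $M$; for a generic truncation this fails, because $\sup_r|r\beta_M'(r)|$ is of order $M$, and the DiPerna--Lions argument you appeal to uses boundedness of the damping precisely at this step --- the unbounded, merely integrable $c$ is the whole point of the present lemma. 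The paper's resolution is the specific choice $\beta_M(r)=M\arctan(r/M)$ together with the contraction estimate \eqref{est:arctan} of Lemma~\ref{lemma:arctan}, $|r_1\beta_M'(r_1)-r_2\beta_M'(r_2)|\le|\beta_M(r_1)-\beta_M(r_2)|$, which shows that the quotient above is bounded by $1$, so the whole term is dominated, uniformly in $M$, by the $L^1_{\rm loc}$ function $(|c|+2|\nabla\cdot b|)\,\|z\gamma'(z)\|_{L^\infty}$. Your proposal identifies neither this inequality nor any substitute for it, so the final limit as you sketch it does not close.
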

\begin{proof}
Let $M>0$ and $\beta_M(r) = M\arctan(r/M)$ for every $r\in \R$. Notice that $\beta_M$ satisfies \eqref{defn:beta}, so that in the sense of distributions
$$ \partial_t \beta_M (u_i) + \nabla \cdot (b\beta_M(u_i)) + \nabla \cdot b \big(u_i \beta_M'(u_i)-\beta_M(u_i)\big) = cu_i \beta_M'(u_i) \qquad i=1,2.
$$
Taking the difference between these equations and setting $v_M = \beta_M(u_1)-\beta_M(u_2)$ we obtain that $v_M$ solves in the sense of distributions
$$\partial_t v_M + \nabla \cdot (bv_M) = (c-\nabla \cdot b) [ u_1 \beta_M'(u_1)- u_2 \beta_M'(u_2)] +\nabla \cdot b  \, v_M.
$$
Thanks to the assumptions on $b$, since the right hand side of the previous equation is locally integrable, and since $v_M \in L^\infty((0,T)\times \R^d)$, it follows by \cite{ambrosio} (see also \cite[Theorem 35]{amcri-edi}) that $v_M$ is also a renormalized solution, namely for every $\gamma$ which satisfies \eqref{defn:beta} we have
$$\partial_t \gamma(v_M) + \nabla \cdot (b\gamma(v_M)) = (c-\nabla \cdot b)\gamma'(v_M)v_M \frac{ u_1 \beta_M'(u_1)- u_2 \beta_M'(u_2)}{\beta_M(u_1)- \beta_M(u_2)} +\nabla \cdot b  \, \gamma(v_M).
$$
This means that, since $v_M(0,\cdot) =0$, for every $\phi \in C^\infty_c([0,T) \times \R^d)$ we have 
 \begin{equation}
 \label{eqn:renorm-sol-M}
 \begin{split}
&
 -\int_0^T\int_{\R^d} [\partial_t \phi + \nabla \phi\cdot b ]\gamma(v_M) \, dx \, dt =
  \\&\int_0^T \int_{\R^d} \phi \Big[ (c-\nabla \cdot b)\gamma'(v_M)v_M \frac{ u_1 \beta_M'(u_1)- u_2 \beta_M'(u_2)}{\beta_M(u_1)- \beta_M(u_2)} +\nabla \cdot b  \, \gamma(v_M) \Big] \, dx \, dt.
  \end{split}
 \end{equation}
Then, we let $M$ go to $\infty$ in the previous equation. First, we note that since $\beta_M(r) \to r$ as $M\to \infty$ it follows that  $v_M$ converges to $u_1-u_2$ pointwise as $M\to \infty$. 
As regards the left-hand side of \eqref{eqn:renorm-sol-M}, $\gamma(v_M)$ converges pointwise to $\gamma (u_1-u_2)$ and these functions are bounded by $\|\gamma \|_{\infty}$. 
The right-hand side of \eqref{eqn:renorm-sol-M} converges pointwise to the right-hand side of \eqref{eqn:renorm-sol-nM} below and by Lemma~\ref{lemma:arctan} it is bounded by the $L^1_{\rm loc}$ function $(|c|+2 |\nabla \cdot b|) \|z\gamma'(z)\|_{L^\infty(\R^d)}$. Hence by dominated convergence we get
\begin{equation}\label{eqn:renorm-sol-nM} 
-\int_0^T\int_{\R^d} [\partial_t \phi + \nabla \phi\cdot b ]\gamma(u) \, dx \, dt=
\int_0^T \int_{\R^d} \phi(t,x) \Big[ (c-  \nabla \cdot b )u \gamma'(u) +  \nabla \cdot b \gamma(u)
 \Big] \, dx \, dt
 \end{equation}
for every $\gamma$ which satisfies \eqref{defn:beta}.
\end{proof}

In the following lemma we enlarge the class of admissible test functions in \eqref{defn:renorm-sol-tested-st}. 
As it will be clear from the proof of Theorem~\ref{thm:main}, a particular Lipschitz, decaying test function will play an important role. In particular in the proof of the uniqueness in Theorem~\ref{thm:main} the estimate \eqref{eqn:est-1} fails when only compactly supported smooth test functions are considered.
\begin{lemma}\label{lemma:test-fund} 
Let $C>0$ and let  $b$ and  $u_0$ be as in Theorem~\ref{thm:main}. Let $u$ be a renormalized solution of \eqref{eqn:cont-damp}
and let $\varphi \in W^{1,\infty}(\R^d)$ be a function with the following decay
 \begin{equation}
 \label{defn:decay-test}
|\varphi(x)| \leq \frac{C}{(1+|x|)^{d+1}},
\qquad |\nabla \varphi(x)| \leq \frac{C}{(1+|x|)^{d+2}} \qquad a.e.\,\,\,\, x\in \R^d.
\end{equation}
Then the function $\int_{\R^d} \varphi(x) \beta (u(t,x))\, dx
$ coincides a.e. with an absolutely continuous function $\Gamma(t)$ such that
$\Gamma(0)=\int_{\R^d} \varphi(x) \beta (u_0(x))\, dx $
   and for a.e. $t\in [0,T]$
 \begin{equation}
 \label{defn:renorm-sol-tested-many-functions}
 \begin{split}
 \frac{d}{dt} \Gamma(t) 
 = \int_{\R^d} \nabla\varphi \cdot b\beta(u) \, dx  +
\int_{\R^d} \varphi \Big[ \nabla \cdot b \big(\beta(u)-u \beta'(u)\big) +  cu \beta'(u) \Big] \, dx .
\end{split}
\end{equation}
\end{lemma}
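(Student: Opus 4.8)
\emph{Plan of proof.} The idea is to run the formulation of Remark~\ref{rmk:renorm-operativa} against the truncated test functions $\varphi_R(x):=\chi_R(x)\varphi(x)$, where $\chi_R(x)=\chi(x/R)$ for a fixed $\chi\in C^\infty_c(\R^d)$ with $\mathbf 1_{B_1}\le\chi\le\mathbf 1_{B_2}$, and then let $R\to\infty$. Each $\varphi_R$ is Lipschitz and compactly supported, so by Remark~\ref{rmk:renorm-operativa} the function $\Gamma_R(t):=\int_{\R^d}\varphi_R\,\beta(u(t,\cdot))\,dx$ agrees a.e.\ with an absolutely continuous function with $\Gamma_R(0)=\int_{\R^d}\varphi_R\,\beta(u_0)\,dx$ and, for a.e.\ $t$,
\[
\Gamma_R'(t)=\int_{\R^d}\nabla\varphi_R\cdot b\,\beta(u)\,dx+\int_{\R^d}\varphi_R\bigl[\nabla\cdot b\,(\beta(u)-u\beta'(u))+cu\beta'(u)\bigr]\,dx.
\]
The goal is to prove that $\Gamma_R\to\Gamma$ uniformly on $[0,T]$ and $\Gamma_R'\to G$ in $L^1(0,T)$, where $G$ is the right-hand side of \eqref{defn:renorm-sol-tested-many-functions}; the standard closure property of absolutely continuous functions then gives that $\Gamma$ is absolutely continuous with $\Gamma'=G$ a.e.\ and $\Gamma(0)=\int_{\R^d}\varphi\,\beta(u_0)\,dx$.

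Most of the convergences are routine applications of dominated convergence, organized around the two decay rates in \eqref{defn:decay-test}. Since $|\varphi|\le C(1+|x|)^{-(d+1)}\in L^1(\R^d)$ and $|\beta(u)|\le\|\beta\|_{L^\infty(\R)}$, one has $|\Gamma_R(t)-\Gamma(t)|\le\|\beta\|_{L^\infty(\R)}\int_{\R^d}(1-\chi_R)|\varphi|\,dx\to0$ uniformly in $t$, and likewise $\Gamma_R(0)\to\int_{\R^d}\varphi\,\beta(u_0)\,dx$. Splitting $\nabla\varphi_R=\chi_R\nabla\varphi+\varphi\nabla\chi_R$, the piece $\chi_R\nabla\varphi\cdot b\,\beta(u)$ converges in $L^1((0,T)\times\R^d)$ because, writing $|b|\le(1+|x|)(f_1+f_\infty)$ with $f_1\in L^1((0,T);L^1(\R^d))$ and $f_\infty\in L^1((0,T);L^\infty(\R^d))$ as allowed by \eqref{eqn:growth-b}, the bound $|\nabla\varphi|\le C(1+|x|)^{-(d+2)}$ yields $|\nabla\varphi\cdot b\,\beta(u)|\le C(1+|x|)^{-(d+1)}(f_1+f_\infty)\|\beta\|_{L^\infty(\R)}\in L^1((0,T)\times\R^d)$. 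Similarly the zeroth-order terms are dominated by $C(1+|x|)^{-(d+1)}\bigl(\|\nabla\cdot b(t,\cdot)\|_{L^\infty(\R^d)}(\|\beta\|_{L^\infty(\R)}+\|z\beta'(z)\|_{L^\infty(\R)})+|c|\,\|z\beta'(z)\|_{L^\infty(\R)}\bigr)\in L^1((0,T)\times\R^d)$, using $\nabla\cdot b\in L^1((0,T);L^\infty(\R^d))$, $c\in L^1((0,T)\times\R^d)$ and \eqref{defn:beta}; dominated convergence gives convergence of all these to their analogues with $\varphi$.

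The only genuinely new point is the boundary term $I_R(t):=\int_{\R^d}\varphi\,\nabla\chi_R\cdot b\,\beta(u)\,dx$, which must vanish as $R\to\infty$: this is exactly where the decay rate $(1+|x|)^{-(d+1)}$ of $\varphi$ is used. Indeed $\nabla\chi_R$ is supported in $B_{2R}\setminus B_R$ with $|\nabla\chi_R|\le C/R$; on that annulus $|\varphi|\le C/R^{d+1}$, $1+|x|\le 3R$ so $|b|\le 3R(f_1+f_\infty)$, and $|\beta(u)|\le\|\beta\|_{L^\infty(\R)}$, whence
\[
|I_R(t)|\le\frac{C}{R^{d+1}}\Bigl(\|f_1(t,\cdot)\|_{L^1(\R^d)}+\Leb{d}(B_{2R})\,\|f_\infty(t,\cdot)\|_{L^\infty(\R^d)}\Bigr)\le\frac{C}{R^{d+1}}\|f_1(t,\cdot)\|_{L^1(\R^d)}+\frac{C}{R}\|f_\infty(t,\cdot)\|_{L^\infty(\R^d)},
\]
so that $\|I_R\|_{L^1(0,T)}\le C R^{-(d+1)}\|f_1\|_{L^1_tL^1_x}+C R^{-1}\|f_\infty\|_{L^1_tL^\infty_x}\to0$. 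This estimate, which balances the decay of $\varphi$ against the $O(R^d)$ volume of the annulus and the $O(R)$ growth of $b$, is the heart of the argument; combining it with the dominated-convergence steps above completes the proof.
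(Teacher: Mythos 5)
Your proof is correct and follows essentially the same route as the paper's: approximate $\varphi$ by compactly supported test functions admissible by Remark~\ref{rmk:renorm-operativa}, then pass to the limit by dominated convergence, exploiting the decay \eqref{defn:decay-test}, the growth condition \eqref{eqn:growth-b}, the bound on $\nabla \cdot b$, and the integrability of $c$. The only cosmetic difference is that you truncate by an explicit cutoff $\chi_R$ and estimate the extra term $\int_{\R^d} \varphi\, \nabla\chi_R \cdot b\, \beta(u)\, dx$ separately, whereas the paper chooses approximants $\varphi_n$ satisfying \eqref{defn:decay-test} with a constant independent of $n$, which absorbs that contribution into the same dominated-convergence bound.
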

\begin{proof}
Although the proof is a standard argument via approximation, we sketch it for the sake of completeness. We approximate the function $\varphi$ by means of smooth, compactly supported functions $\varphi_n$ satisfying the same decay \eqref{defn:decay-test} with $C$ independent on $n$. 
By Remark~\ref{rmk:renorm-operativa}, the function $t \to \int_{\R^d} \varphi_n(x) \beta (u(t,x))\, dx $ coincides for a.e. $t \in [0,T]$ with an absolutely continuous function $\Gamma_n(t)$ which satisfies \eqref{defn:renorm-sol-tested} and $\Gamma_n(0) = \int_{\R^d} \varphi_n(x) \beta (u_0(x))\, dx $.
Thanks to  \eqref{defn:decay-test}, to the growth assumptions on $b$, and to the integrability of $c$, by dominated convergence we get that 
 \begin{equation}
 \label{defn:conv-gamma-deriv}
 \begin{split}
 \lim_{n\to \infty} \frac{d}{dt} \Gamma_n(t) &= 
 \lim_{n\to \infty} \int_{\R^d} \nabla\varphi_n \cdot b\beta(u) \, dx  +
\int_{\R^d} \varphi_n \Big[ \nabla \cdot b \big(\beta(u)-u \beta'(u)\big) +  cu \beta'(u) \Big] \, dx 
\\
&
= \int_{\R^d} \nabla\varphi \cdot b\beta(u) \, dx  +
\int_{\R^d} \varphi \Big[ \nabla \cdot b \big(\beta(u)-u \beta'(u)\big) +  cu \beta'(u) \Big] \, dx 
 \end{split}
\end{equation}
in $L^1(0,T)$.
Moreover by dominated convergence we have
$$\lim_{n\to \infty}  \Gamma_n(0) = \int_{\R^d} \varphi(x) \beta (u_0(x))\, dx
$$
and for a.e. $t\in [0,T]$
 \begin{equation}
 \label{defn:conv-gamma}
\Gamma(t) = \lim_{n\to \infty} \Gamma_n(t) = \lim_{n\to \infty} \int_{\R^d} \varphi_n(x) \beta (u(t,x))\, dx = \int_{\R^d} \varphi(x) \beta (u(t,x))\, dx.
\end{equation}
Hence the functions $\Gamma_n$ pointwise converge to an absolutely continuous function $\Gamma:[0,T] \to \R$ such that  \eqref{defn:renorm-sol-tested-many-functions} holds, $\Gamma (0)= \int_{\R^d} \varphi(x) \beta (u_0(x))\, dx$, and $\Gamma(t) = \int_{\R^d} \varphi(x) \beta (u(t,x))\, dx$ for a.e. $t\in [0,T]$.
\end{proof}

\begin{proof}[Proof of Theorem~\ref{thm:main}, Uniqueness]
Up to taking the difference of two renormalized solutions, which is still a renormalized solution with initial datum $0$ by Lemma~\ref{lemma:diff-renorm}, it is enough to show that if $u$ is a renormalized solution with initial datum $0$ then $u = 0$ in $[0,T] \times \R^d$.

Let $\delta>0$. We consider the positive function
\begin{equation}
\label{defn:beta-delta}
\beta_\delta(r) = \log\Big(1+\frac{[\arctan(r)]^2}{\delta} \Big) \qquad \forall r\in \R,
\end{equation}
which satisfies \eqref{defn:beta} and in particular, thanks to \eqref{est:arctan} applied with $M=1$, $r_1=r$, $r_2=0$
\begin{equation}
\label{eqn:propr-gamma-de}
|r \beta'_\delta(r)| = \Big| \frac{\arctan(r)}{\delta+ [\arctan(r)]^2} r\arctan'(r) \Big| \leq 1 \qquad \forall r\in \R.
\end{equation}
For every $R>0$ consider
\begin{equation}
\label{defn:phiR}
\varphi_R(x) =
\begin{cases}
\displaystyle{\frac{1}{2^{d+1}} }\qquad & x\in \R^d, \, |x|<R
\\
\displaystyle{\frac{R^{d+1}}{(R + |x|)^{d+1}} } \qquad & x\in \R^d, \, |x|>R
.
\end{cases}
\end{equation}
We use $\beta_\delta$ to renormalize the solution $u$ and $\varphi_R$ as a test function.
Notice that $\varphi_R \in L^1\cap W^{1,\infty}(\R^d)$ with $0\leq \varphi_R \leq 1$ and by Lemma~\ref{lemma:test-fund} the function $\varphi_R$ is an admissible test function in \eqref{defn:renorm-sol-tested-many-functions}.
Hence there exists an absolutely continuous function $\Gamma_{\delta,R} : [0,T] \to \R$ such that $\Gamma_{\delta,R}(0) =0$ and for a.e. $t\in [0,T]$
$$\Gamma_{\delta,R}(t) = \int_{\R^d} \varphi_R(x) \beta_\delta (u(t,x))\, dx,$$
 \begin{equation}
 \label{defn:renorm-sol-tested-2}
 \begin{split}
 \frac{d}{dt} \Gamma_{\delta,R}(t)
  = \int_{\R^d} \nabla\varphi_R \cdot b\beta_\delta(u) \, dx  +
\int_{\R^d} \varphi_R (c- \nabla \cdot b) u \beta_\delta'(u)  \, dx 
+\int_{\R^d} \varphi_R \nabla \cdot b \beta_\delta(u) \, dx 
\end{split}
\end{equation}
(here and in the following we omit the dependence of $b$, $c$, $u$ on $(t,x)$ and of $\varphi_R$ on $x$).
We estimate each term in the right-hand side of \eqref{defn:renorm-sol-tested-2}.
The third term can be estimated thanks to the condition on the divergence of $b$
\begin{equation}
\label{eqn:est-3}
\int_{\R^d} \varphi_R \nabla \cdot b \beta_\delta(u) \, dx
\leq \|\nabla \cdot b(t,\cdot)\|_{L^\infty(\R^d)} \int_{\R^d} \varphi_R \beta_\delta(u) \, dx.
\end{equation}
As regards the second term, we use \eqref{eqn:propr-gamma-de} to deduce
\begin{equation}
\label{eqn:est-2}
\int_{\R^d} \varphi_R (c- \nabla \cdot b) u \beta_\delta'(u)  \, dx 
\leq \int_{\R^d} \varphi_R (|c|+ |\nabla \cdot b|)  \, dx 
\leq \int_{\R^d} |c|\, dx +
  \|\nabla \cdot b(t,\cdot)\|_{L^\infty(\R^d)} \int_{\R^d} \varphi_R  \, dx .
\end{equation}
To estimate the first term, we take into account the growth condition \eqref{eqn:growth-b} on $b$. Let $b_1$ and $b_2$ two nonnegative functions such that
$$
\frac{|b(t,x)|}{1+|x|}\leq b_1(t,x)+b_2(t), \qquad b_1 \in L^1((0,T)\times\R^d), \qquad 
b_2 \in L^1((0,T)).
$$
Notice that $\nabla \varphi_R(x)$ can be explicitly computed;  for every $x\in \R^d$ with $|x|<R$ it is $0$ and if $|x|>R$ we have that $|\nabla \varphi_R(x)|\leq (d+1) \varphi_R(x) (R+|x|)^{-1}$. If $R>1$, we have 
\begin{equation}
\label{eqn:est-1}
\begin{split}
\int_{\R^d} \nabla\varphi_R \cdot b\beta_\delta(u) \, dx 
&\leq
(d+1) \int_{\R^d \setminus B_R}\frac{\varphi_R}{R+|x|} (1+|x|) (b_1+b_2) \beta_\delta(u) \, dx 
\\
&\leq
(d+1) \int_{\R^d \setminus B_R}{\varphi_R} (b_1+b_2) \beta_\delta(u) \, dx 
\\
&\leq
(d+1) \log\Big(1+\frac{\pi^2}{4\delta}\Big) \int_{\R^d \setminus B_R} b_1\, dx 
+
(d+1) b_2 \int_{\R^d}{\varphi_R} \beta_\delta(u) \, dx .
\end{split}
\end{equation}

Setting for every $t\in [0,T]$ the $L^1$ functions:
$$a(t) =  \|\nabla \cdot b(t,\cdot)\|_{L^\infty(\R^d)} +(d+1) b_2(t),$$
$$b_R(t) =\|c(t, \cdot) \|_{L^1(\R^d)} + \|\nabla \cdot b(t,\cdot)\|_{L^\infty(\R^d)} \| \varphi_R  \|_{L^1(\R^d)},$$
 $$c_R(t) = (d+1) \|b_1(t,\cdot) \|_{L^1(\R^d \setminus B_R)},
 $$
 from \eqref{defn:renorm-sol-tested-2}, \eqref{eqn:est-3}, \eqref{eqn:est-2}, and \eqref{eqn:est-1} we deduce that for a.e. $t\in [0,T]$
 $$\frac{d}{dt}\Gamma_{\delta,R}(t)  \leq a(t) \Gamma_{\delta,R}(t) + b_R(t) + c_R(t)\log\Big(1+\frac{\pi^2}{4\delta}\Big).$$
Since $\Gamma_{\delta,R}(0)=0$, by Gronwall lemma 
we obtain that for every $t\in [0,T]$
\begin{equation}
\label{est:gronw}
\begin{split}
\Gamma_{\delta,R}(t)  
&\leq \exp\Big(\int_0^T a(s)ds\Big)\Big(\int_0^T b_R(s)ds + \log\Big(1+\frac{\pi^2}{4\delta}\Big) \int_0^T c_R(s)ds \Big)
\\
&=\exp(A)\Big(B_R + \log\Big(1+\frac{\pi^2}{4\delta}\Big)C_R \Big).
\end{split}
\end{equation}
Notice that by definition 
\begin{equation}
\label{eqn:CR}
\lim_{R\to \infty} C_R= (d+1) \lim_{R\to \infty} \int_0^T  \int_{\R^d \setminus B_R} b_1(s,x)\, dx ds=0.
\end{equation}

We conclude finding a contradiction as in \cite{crippade1,boucrising}.
Let us assume that $u(t,\cdot)$ is not identically $0$ for some $t\in [0,T]$; then $\arctan u(t,\cdot)$ is not identically $0$ and there exists $R_0>0$ and $\gamma>0$ such that $\Leb{d}(\{ x \in B_{R_0}: [\arctan u(t,x)]^2> \gamma\})>0$.
Dividing \eqref{est:gronw} by $\log(1+\gamma/\delta^2)$ we obtain that for every $R\geq R_0$
\begin{equation*}
\begin{split}
0 &< \frac{\Leb{d}(\{ x \in B_{R_0}: [\arctan u(t,x)]^2> \gamma\}) }{2^{d+1}}
\leq \Big(\log\Big(1+\frac{\gamma}{\delta} \Big) \Big)^{-1}\Gamma_{\delta,R}(t) 
\\
&\leq \exp(A)\Big(\log\Big(1+\frac{\gamma}{\delta} \Big) \Big)^{-1}\Big(B_R + \log\Big(1+\frac{\pi^2}{4\delta}\Big)C_R \Big).
\end{split}
\end{equation*}
Letting $\delta$ go to $0$ we find
\begin{equation*}
\begin{split}
0 < \frac{\Leb{d}(\{ x \in B_{R_0}: [\arctan u(t,x)]^2> \gamma\}) }{2^{d+1}}
\leq \exp(A)C_R,
\end{split}
\end{equation*}
which is a contradiction thanks to \eqref{eqn:CR} provided that $R$ is chosen big enough.
\end{proof}
{\section{Divergence in $BMO$}\label{sec:bmo}}
In a recent paper \cite{bmo}, the author proved existence and uniqueness of solutions of the transport equation when the divergence $\nabla \cdot b$ of the vector field $b$ is the sum of a function in $L^\infty$ and a compactly supported function of bounded mean oscillation (defined in the sequel). This result extends the previous theory of \cite{lions,ambrosio} (see also \cite{amcri-edi} and the references quoted therein), where $\nabla \cdot b$ is assumed to be bounded in space.
Uniqueness is the most delicate point and its proof is based on a new inequality for $BMO$ functions, which gives the differential inequality $\frac{d}{dt} \Gamma \leq \Gamma\log(1+\Gamma)$ in $[0,T]$. Then, uniqueness follows by Gronwall Lemma.  
We give a different proof of uniqueness, under general growth conditions on the vector field, which follows the lines of Theorem~\ref{thm:main}.

Before stating the result, we recall the definition and the properties of functions of bounded mean oscillation.
\begin{definition}
Given a locally integrable function $f :\R^d \to \R$ we consider its average on $B_r(x)$
$$(f)_{B_r(x)} = \negint_{B_r(x)} f(y) \, dy$$
and its mean oscillation in $B_r(x)$
$$\negint_{B_r(x)} |f(y) - (f)_{B_r(x)} | \, dy.$$
We say that a function is of bounded mean oscillation (BMO) if
\begin{equation}
\label{eqn:bmp-norm}
\sup_{r>0, x\in \R^d} \negint_{B_r(x)} |f(x) - (f)_{B_r(x)} | \, dx <\infty.
\end{equation}
\end{definition}
A natural norm $\| \cdot \|_{*}$ on the quotient space of $BMO$ functions modulo the space of constant functions is given by the quantity in \eqref{eqn:bmp-norm}.
With a slight abuse of notation, for every $M>0$ we denote by $BMO_c(B_M)$ the space of $BMO$ functions $f:\R^d \to \R$ whose support is contained in $B_M$, endowed with the norm $\| \cdot \|_{*}$.

The John-Nirenberg inequality \cite{jn} implies that there exist two constants $C_{JN},c_{JN}>0$ depending only on the dimension such that for every function of bounded mean oscillation $f:\R^d \to \R$ and for every $M>0$
\begin{equation}
\label{eqn:bmo-jonir}
\Leb{d}\big( \{x\in B_M:  |f - ( f )_{B_{M}}| >\eta \} \big) \leq C_{JN} \Leb{d}(B_M) \exp\Big(- \frac{c_{JN} \eta}{ \|f\|_{*}} \Big) \qquad \forall \eta>0.
\end{equation}

In the following lemma we present the properties of $BMO$ functions which are used in the proof of Theorem \ref{thm:bmo}; in particular, we prove the exponential decay of the integral of $f$ on its superlevels.
\begin{lemma}\label{lemma:bmo-dec}
Let $f:\R^d \to \R$ be a nonnegative function of bounded mean oscillation supported in $B_M$. Then there exist $C, c>0$ depending only on $d$ and $M$ such that
\begin{equation}
\label{eqn:bmo-average}
( f)_{B_{M}} \leq 2^{d+1} \|f\|_{*},
\end{equation}
\begin{equation}
\label{eqn:bmo-est}
\int_{\R^d} {\big( f(x) - \lambda \| f \|_{*} \big)_+} \, dx \leq C \exp(-c\lambda)\| f \|_{*} \qquad \forall \lambda>2^{d+2}.
\end{equation}
\end{lemma}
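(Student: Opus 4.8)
The plan is to derive both inequalities from the John--Nirenberg estimate \eqref{eqn:bmo-jonir}, using crucially that $f$ is nonnegative and supported in $B_M$.

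First I would prove \eqref{eqn:bmo-average}. Since $f\geq 0$ and $\supp f\subseteq B_M$, on any ball $B_{2M}(x_0)$ that contains $B_M$ one has $\negint_{B_{2M}(x_0)} f\,dy = |B_M|/|B_{2M}| \cdot (f)_{B_M} = 2^{-d}(f)_{B_M}$ when $B_{2M}(x_0)\supseteq B_M$; more simply, taking the ball $B_{2M}=B_{2M}(0)$, we have $(f)_{B_{2M}} = 2^{-d}(f)_{B_M}$ because $f$ vanishes outside $B_M\subseteq B_{2M}$. On the other hand $(f)_{B_M}\geq 0$, so
\[
(f)_{B_M} = (f)_{B_M} - (f)_{B_{2M}} + 2^{-d}(f)_{B_M},
\]
hence $(1-2^{-d})(f)_{B_M} = (f)_{B_M}-(f)_{B_{2M}}$, and by the standard comparison of averages on nested balls (bounding $|(f)_{B_M}-(f)_{B_{2M}}|$ by $\negint_{B_M}|f-(f)_{B_{2M}}|\leq 2^d\negint_{B_{2M}}|f-(f)_{B_{2M}}|\leq 2^d\|f\|_*$) we get $(f)_{B_M}\leq \frac{2^d}{1-2^{-d}}\|f\|_* \leq 2^{d+1}\|f\|_*$, which is \eqref{eqn:bmo-average}.

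For \eqref{eqn:bmo-est}, I would use the layer-cake formula together with \eqref{eqn:bmo-jonir}. Write $\|f\|_*=:N$ and fix $\lambda>2^{d+2}$. Since $(f)_{B_M}\leq 2^{d+1}N$ by \eqref{eqn:bmo-average}, on the set where $f>\lambda N$ we have $f-(f)_{B_M} > \lambda N - 2^{d+1}N \geq \tfrac\lambda2 N$ (using $\lambda>2^{d+2}$). Then
\[
\int_{\R^d}\big(f-\lambda N\big)_+\,dx
= \int_0^\infty \Leb{d}\big(\{f > \lambda N + s\}\big)\,ds
\leq \int_0^\infty \Leb{d}\Big(\Big\{|f-(f)_{B_M}| > \tfrac\lambda2 N + s\Big\}\Big)\,ds,
\]
and applying \eqref{eqn:bmo-jonir} (with $M$ as the ball radius, and using $\supp f\subseteq B_M$ so the superlevel sets lie in $B_M$) this is bounded by
\[
C_{JN}\Leb{d}(B_M)\int_0^\infty \exp\!\Big(-\frac{c_{JN}(\tfrac\lambda2 N + s)}{N}\Big)\,ds
= C_{JN}\Leb{d}(B_M)\,\frac{N}{c_{JN}}\,\exp\!\Big(-\frac{c_{JN}\lambda}{2}\Big),
\]
which has exactly the form $C\exp(-c\lambda)\|f\|_*$ with $C = C_{JN}\Leb{d}(B_M)/c_{JN}$ and $c=c_{JN}/2$, both depending only on $d$ and $M$.

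The only mild subtlety — and the step I would be most careful about — is the bookkeeping in the first part: one must make sure the superlevel sets of $f$ are genuinely contained in a fixed ball so that John--Nirenberg applies with a ball radius independent of $\lambda$, and that the shift by the average $(f)_{B_M}$ is absorbed correctly using $\lambda>2^{d+2}$; the nonnegativity and compact support of $f$ are what make both of these work cleanly, and no serious obstacle remains beyond routine constants.
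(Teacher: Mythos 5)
Your proposal is correct and follows essentially the same route as the paper: the average bound comes from comparing $(f)_{B_M}$ with $(f)_{B_{2M}}=2^{-d}(f)_{B_M}$ via the BMO seminorm, and the exponential estimate comes from absorbing the shift by $(f)_{B_M}$ (using $\lambda>2^{d+2}$), applying John--Nirenberg on $B_M$, and integrating the exponential tail, yielding the same constants $c=c_{JN}/2$ and $C=C_{JN}\Leb{d}(B_M)/c_{JN}$. Your layer-cake bookkeeping is in fact slightly cleaner than the paper's, so no changes are needed.
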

\begin{proof}
Since $f=0$ on $B_{2M}\setminus B_M$ we have that
$(f)_{B_{2M}} = {2^{-d}}{(f)_{B_M}}$.
Hence 
\begin{equation*}
\begin{split}
(f)_{B_M} &= 
\frac{ ( f)_{B_{M}} - ( f)_{B_{2M}}}{1-2^{-d}}
\leq 
2 \negint_{B_{M}} | f(x) - ( f)_{B_{2M}}| \, dx 
\\
&\leq 
2^{d+1} \negint_{B_{2M}} | f(x) - ( f)_{B_{2M}} | \, dx 
\leq 
2^{d+1} \|f\|_{*},
\end{split}
\end{equation*}
which proves \eqref{eqn:bmo-average}.
Thanks to \eqref{eqn:bmo-average}, for every $\lambda >2^{d+2}$ we have that 
$$\big( f(x) - \lambda \| f \|_{*} \big)_+ \leq \big( f(x) - (f)_{B_M} \big)_+ \qquad \forall x\in \R^d$$
 and similarly
$$\{ x\in B_M : f(x)> \lambda \| f \|_{*} \} \subseteq \Big\{ x\in B_M : f(x)- (f)_{B_M}> \frac{\lambda \| f \|_{*}}{2} \Big\}.$$
 Using also \eqref{eqn:bmo-jonir}, we deduce that 
\begin{equation}
\label{eqn:bmo-sotto}
\begin{split}
 \int_{B_{M}}{\big( f(x) - \lambda \| f \|_{*} \big)_+} \, dx 
 &\leq 
 \int_{  \{x\in B_M : f- ( f)_{B_{M}}>  \lambda \| f \|_{*}/2\}
 }{\big( f(x) - ( f)_{B_{M}} \big)_+} \, dx 
 \\
 &=
 \int_{\lambda\| f \|_{*}/2}^\infty { \Leb{d}\big( \{ x\in B_{M}: f(x) - ( f)_{B_M}> r \} \big)}\, dr
 \\
 &\leq
C_{JN}  \Leb{d}(B_M) \int_{\lambda\| f \|_{*}/2}^\infty \exp\Big(- \frac{c_{JN} r}{ \|f\|_{*}} \Big)  \, dr
 \\
 &\leq
  \frac{C_{JN}\Leb{d}(B_M) }{c_{JN}} \|f\|_{*} \exp\Big(-\frac{c_{JN} \lambda}{2}\Big),
 \end{split}
\end{equation}
which proves \eqref{eqn:bmo-est}.
\end{proof}

In the following, we prove that the continuity equation
\begin{equation}
\label{eqn:transp}
\partial_t u+ \nabla \cdot( bu) = 0
\end{equation}
 with a $BV$ vector field with divergence in $BMO_c(\R^d)+L^\infty(\R^d)$ is well posed in the class of bounded distributional solutions. We recall that a  function $u \in L^\infty( (0,T) \times \R^d)$ is a distributional solution of~\eqref{eqn:transp} with initial datum $u_0 \in L^\infty(\R^d)$ if  for every $\phi \in C^{\infty}_c ( [0,T) \times \R^d)$
 \begin{equation*}
\int_{\R^d} \phi(0,x) u_0(x)t\, dx +
 \int_0^T \! \int_{\R^d} [\partial_t \phi(t,x) + \nabla\phi (t,x) b(t,x) ]u(t,x) \, dx \, dt  \, =0.
\end{equation*}
 
\begin{theorem}\label{thm:bmo}
Let $u_0 \in L^\infty(\R^d)$, $M>0$, and $b\in L^1((0,T); BV_{\rm loc} (\R^d;\R^d))$ a vector field such that
\begin{equation}
\label{eqn:bmo-div-b}
|\nabla \cdot b| \in L^1((0,T); L^\infty(\R^d)) + L^1((0,T); BMO_c(B_M)),
\end{equation}
$$\frac{|b(t,x)|}{1+|x|} \in L^1((0,T); L^1(\R^d))+ L^1((0,T); L^\infty(\R^d)).
$$
Then there exists a unique distributional solution $u\in L^\infty( (0,T) \times \R^d)$ of \eqref{eqn:transp}
with initial datum $u_0$.
\end{theorem}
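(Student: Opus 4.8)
The plan is to run the scheme of the proof of Theorem~\ref{thm:main} almost verbatim, the only genuinely new feature being that the divergence, no longer bounded, must be controlled through the exponential decay estimate \eqref{eqn:bmo-est} of Lemma~\ref{lemma:bmo-dec}. Existence of a bounded distributional solution is the easier direction: either it is quoted from \cite{bmo}, or it is reproved by mollifying $b$ as in Step~1 of Lemma~\ref{lemma:jac}, establishing $L^p((0,T)\times\R^d)$ bounds for the smooth solutions that are uniform in the regularization parameter (mollification increases neither the $BMO$ nor the $L^\infty$ norm, so \eqref{eqn:bmo-est} applies to the mollified divergence), and passing to a weak-$*$ limit. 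The bulk of the work is uniqueness, for which — by linearity of the equation — it is enough to prove that a bounded distributional solution $u$ of \eqref{eqn:transp} with $u_0=0$ vanishes. Fix a splitting $|\nabla\cdot b|\le g+h$ with $0\le g\in L^1((0,T);L^\infty(\R^d))$ and $0\le h\in L^1((0,T);BMO_c(B_M))$ (replace $h$ by $|h|$ if needed, $\||h|\|_*\le2\|h\|_*$). Since $BMO_c(B_M)\subseteq L^1(\R^d)$ we have $\nabla\cdot b\in L^1((0,T);L^1_{\rm loc}(\R^d))$, so by Ambrosio's renormalization theorem \cite{ambrosio} (cf.\ \cite[Theorem 35]{amcri-edi}) the bounded function $u$ is a renormalized solution; being bounded, it may be renormalized by any $C^1$ function, in particular by $\beta(r)=r^2$ (truncated outside $[-\|u\|_\infty,\|u\|_\infty]$ to fit \eqref{defn:beta}).

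\textbf{Localized energy estimate.} Testing the equation for $\beta(u)=u^2$ against the Lipschitz, decaying function $\varphi_R$ of \eqref{defn:phiR} — admissible by Lemma~\ref{lemma:test-fund} — and writing $\Gamma_R(t)=\int_{\R^d}\varphi_R(x)\,u(t,x)^2\,dx$, one gets, for a.e.\ $t$,
\[
\frac{d}{dt}\Gamma_R(t)=\int_{\R^d}\nabla\varphi_R\cdot b\,u^2\,dx-\int_{\R^d}\varphi_R\,\nabla\cdot b\,u^2\,dx,\qquad\Gamma_R(0)=0 .
\]
The first integral is bounded exactly as in \eqref{eqn:est-1}, by $(d+1)b_2(t)\Gamma_R(t)+(d+1)\|u\|_\infty^2\|b_1(t,\cdot)\|_{L^1(\R^d\setminus B_R)}$. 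For the second, $-\int\varphi_R\nabla\cdot b\,u^2\le\|g(t,\cdot)\|_{L^\infty}\Gamma_R(t)+\int\varphi_R\,h\,u^2$, and for $R>M$ we have $\varphi_R\equiv2^{-(d+1)}$ on $\supp h(t,\cdot)\subseteq B_M$, so $\int_{B_M}u^2\le2^{d+1}\Gamma_R(t)$. Here the new ingredient enters: split $h=h\wedge(\lambda\|h\|_*)+(h-\lambda\|h\|_*)_+$ with a free parameter $\lambda>2^{d+2}$, bound $u^2$ by $\|u\|_\infty^2$ on the superlevel, invoke \eqref{eqn:bmo-est}, and optimize $\lambda\simeq c^{-1}\log(C\|u\|_\infty^2/\Gamma_R(t))$ (taking $\lambda=2^{d+2}+1$ when $\Gamma_R$ is large); this yields $\int_{\R^d}\varphi_R\,h\,u^2\,dx\le C'\|h(t,\cdot)\|_*\,\Phi(\Gamma_R(t))$ with $\Phi(s)=s(1+\log_+(M'/s))$, $\Phi(0)=0$, $M'=C\|u\|_\infty^2$. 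Collecting everything,
\[
\frac{d}{dt}\Gamma_R(t)\le\alpha(t)\Gamma_R(t)+c_R(t)+C'\|h(t,\cdot)\|_*\,\Phi(\Gamma_R(t)),
\]
with $\alpha\in L^1(0,T)$ independent of $R$ and $c_R(t)=(d+1)\|u\|_\infty^2\|b_1(t,\cdot)\|_{L^1(\R^d\setminus B_R)}$ satisfying $\|c_R\|_{L^1(0,T)}\to0$ as $R\to\infty$.

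\textbf{Conclusion.} Since $\Phi$ is continuous, nondecreasing and satisfies the Osgood condition $\int_{0^+}ds/\Phi(s)=+\infty$, a comparison argument bounds $\Gamma_R$ by the maximal solution $\rho_R$ of $\dot\rho=\alpha\rho+c_R+C'\|h(t,\cdot)\|_*\Phi(\rho)$, $\rho(0)=0$ (global and uniformly bounded in $R$, since $\Phi(s)\le\Phi(M')+s$; the family $\{\rho_R\}$ is equi-absolutely continuous). As $R\to\infty$, $c_R\to0$ in $L^1(0,T)$; by Arzel\`a--Ascoli and stability of Carath\'eodory solutions every limit point of $\{\rho_R\}$ solves the same ODE with $c_R$ replaced by $0$, hence vanishes by Osgood's lemma (note $\alpha\rho+C'\|h(t,\cdot)\|_*\Phi(\rho)\le(\alpha+C'\|h(t,\cdot)\|_*)(\rho+\Phi(\rho))$ and $\int_{0^+}ds/(s+\Phi(s))=+\infty$). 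Therefore $\Gamma_R\to0$ uniformly on $[0,T]$. Arguing by contradiction as in \cite{crippade1,boucrising}: if $u(t_0,\cdot)\ne0$ for some $t_0$, choose $R_0,\gamma>0$ with $\Leb{d}(\{x\in B_{R_0}:u(t_0,x)^2>\gamma\})>0$; then $\Gamma_R(t_0)\ge2^{-(d+1)}\gamma\,\Leb{d}(\{x\in B_{R_0}:u(t_0,x)^2>\gamma\})>0$ for every $R\ge R_0$, a bound independent of $R$, which contradicts $\Gamma_R(t_0)\to0$. Hence $u\equiv0$, which together with existence gives the theorem.

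\textbf{Main obstacle.} The delicate point is precisely the last paragraph. In Theorem~\ref{thm:main} the bounded renormalization $\beta_\delta$ produced a \emph{linear} differential inequality, closed by Gronwall and then ``divided by $\log(1+\gamma/\delta)$''; here the $BMO$ estimate unavoidably produces the nonlinearity $\Phi(s)\sim s\log(1/s)$, superlinear at the origin, which cannot be linearized. Uniqueness must therefore come from marrying the Osgood structure (which alone would give $\Gamma\equiv0$ were $b$ of nice growth, i.e.\ $c_R\equiv0$) with the vanishing of the forcing $c_R$ as $R\to\infty$, and this requires some care with the comparison principle for a right-hand side that is not Lipschitz near $0$. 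Everything else — admissibility of $\varphi_R$, the exponential tail \eqref{eqn:bmo-est} via John--Nirenberg, the growth-term estimate, and the uniform-in-$\eps$ $L^p$ bounds needed for existence — is routine, provided all constants are kept independent of $R$ and of the time slice.
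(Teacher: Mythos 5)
Your proposal is correct in substance, but it takes a genuinely different route from the paper's. Both arguments share the same ingredients: Ambrosio's renormalization theorem, the decaying weight $\varphi_R$ of \eqref{defn:phiR} (admissible by the adaptation of Lemma~\ref{lemma:test-fund}), the growth estimate as in \eqref{eqn:est-1}, and the John--Nirenberg-based decay \eqref{eqn:bmo-est} of Lemma~\ref{lemma:bmo-dec} applied after splitting the $BMO$ part of the divergence at height $\lambda\|\cdot\|_*$. The difference is how the parameter $\lambda$ is used. You renormalize with (a truncation of) $u^2$ --- legitimate precisely because $u\in L^\infty$ --- and optimize $\lambda$ pointwise in time, which turns the $BMO$ contribution into the superlinear nonlinearity $\Phi(s)\simeq s\log(1/s)$ and leads to an Osgood/Bihari argument with the vanishing forcing $c_R$; this is in spirit Mucha's original $\Gamma\log\Gamma$ inequality from \cite{bmo}, upgraded by the localization $\varphi_R$ so as to cover the general growth condition on $b$. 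The paper instead keeps the bounded renormalization $\beta_\delta$ of \eqref{defn:beta-delta}, fixes $\lambda$, and restricts to a short time interval $[0,\tau_0]$ with $\int_0^{\tau_0}\|d_2(t,\cdot)\|_*\,dt\le c/2$, so that the Gronwall factor $\exp(A_\lambda)$ grows at most like $e^{c\lambda/2}$ while the coefficient $D_\lambda$ decays like $e^{-c\lambda}$; the differential inequality stays \emph{linear}, and the conclusion is obtained by dividing by $\log(1/\delta)$ exactly as in Theorem~\ref{thm:main}, then iterating over time intervals. What each approach buys: yours avoids the time-slicing and the $\delta\to0$ limit, but it uses boundedness of $u$ essentially (so, unlike the paper's scheme, it would not carry over as written to renormalized, possibly non-integrable solutions or to the damped equation with $cu$, which the paper notes can be handled by its method), and it shifts the technical burden to the comparison principle for a Carath\'eodory right-hand side that is not Lipschitz at $0$ together with the stability as $c_R\to0$ --- this is correct but is exactly the step to nail down; a direct Bihari-type integral inequality with $\omega(s)=s+\Phi(s)$ and $G(s)=\int_1^s d\sigma/\omega(\sigma)$, using that $G(\|c_R\|_{L^1(0,T)})\to-\infty$ as $R\to\infty$, would give $\Gamma_R\to0$ uniformly without the maximal-solution and Arzel\`a--Ascoli detour. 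On existence both you and the paper defer to the standard regularization argument, so no discrepancy there.
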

Existence is obtained through a standard regularization argument, see \cite{ambrosio} or \cite[Appendix A2]{bmo}, and we omit the proof.
The result can be also generalized adding a right-and side of the form $cu$, for some $c \in L^1((0,T)\times \R^d)$, with the same ideas as in Section~\ref{sec:un}, and we would have to consider renormalized solutions in place of distributional solutions.

\begin{proof}[Proof of uniqueness]
Given $R,\delta>0$, we consider the functions $\varphi_R$ and $ \beta_\delta$
defined as in~\eqref{defn:beta-delta} and~\eqref{defn:phiR}.
By the linearity of the continuity equation \eqref{eqn:transp}, up to taking the difference of two distributional solutions with the same initial datum, it is enough to show that any distributional solution $u$ with initial datum $0$ is constantly $0$. Thanks to the assumptions on $b$ and to \cite{ambrosio}, $u$ is also a renormalized solution with initial datum $0$ (in the sense of Definition~\ref{defn:renorm-sol} with $c=0$).
By an easy adaptation of Lemma~\ref{lemma:test-fund} (with $c=0$ and the assumption \eqref{eqn:bmo-div-b} on $\nabla \cdot b$) we can use $\varphi_R$ as a test function in \eqref{defn:renorm-sol-tested-many-functions}; in other words, for every $R,\delta>0$ there exists an absolutely continuous function $\Gamma_{\delta,R}:[0,T] \to \R$ such that $\Gamma_{\delta,R}(0)=0$,
$$\Gamma_{\delta,R}(t) = \int_{\R^d} \varphi_R(x) \beta (u(t,x))\, dx \qquad \mbox{for a.e. }t \in [0,T],$$
 \begin{equation}
 \label{defn:bmo-renorm-sol}
 \begin{split}
 \frac{d}{dt} \Gamma_{\delta,R}
 = \int_{\R^d} \nabla\varphi_R b\beta_\delta(u) \, dx  +
\int_{\R^d} \varphi_R \nabla \cdot b \big(\beta_\delta(u)-u \beta_\delta'(u)\big) \, dx \qquad \mbox{a.e. in } [0,T].
\end{split}
\end{equation}

We estimate the first term in the right-hand side of \eqref{defn:bmo-renorm-sol} as in \eqref{eqn:est-1}. For every $R>1$ 
\begin{equation}
\label{eqn:bmo-est-1}
\begin{split}
\int_{\R^d} \nabla\varphi_R \cdot b\beta_\delta(u) \, dx 
&\leq
(d+1) \log\Big(1+\frac{\pi^2}{4\delta}\Big) \int_{\R^d \setminus B_R} b_1\, dx 
+
(d+1) b_2 \int_{\R^d}{\varphi_R} \beta_\delta(u) \, dx ,
\end{split}
\end{equation}
where 
$b_1$ and $b_2$ are nonnegative functions such that
$$
\frac{|b(t,x)|}{1+|x|}\leq b_1(t,x)+b_2(t) \qquad b_1 \in L^1((0,T)\times\R^d), \qquad 
b_2 \in L^1((0,T)).
$$

Let $d_1$ and $d_2$ be nonnegative functions such that
$$
|\nabla \cdot b(t,x)| \leq d_1(t,x)+d_2(t,x), \quad d_1 \in L^1((0,T); L^\infty(\R^d)), \quad 
d_2 \in L^1((0,T); BMO_c(B_M)).
$$

Let $\lambda>2^{d+2}$ to be chosen later.
Since $|u \beta_\delta'(u)| \leq 1$ for every $\delta>0$ (see \eqref{eqn:propr-gamma-de}) we estimate the second term in the right-hand side of \eqref{defn:bmo-renorm-sol}
\begin{equation}
\label{eqn:bmo-est-2}
\begin{split}
&\int_{\R^d} \varphi_R \nabla \cdot b \big(\beta_\delta(u)-u \beta_\delta'(u)\big) \, dx 
\leq \int_{\R^d} \varphi_R (d_1+d_2) \big|\beta_\delta(u)-u \beta_\delta'(u)\big| \, dx 
\\
&\leq 
(\|  d_1 \|_{\infty} + \lambda \|  d_2 \|_{*}) \int_{\R^d} \varphi_R \big(\beta_\delta(u)+1\big) \, dx 
+
\Big( \log\Big(1+\frac{\pi^2}{4\delta}\Big)+1\Big)
\int_{\R^d}  \big(|d_2 | - \lambda  \|  d_2 \|_{*}\big)_+ \, dx 
\\
&\leq 
(\|  d_1 \|_{\infty} + \lambda \|  d_2 \|_{*}) \int_{\R^d} \varphi_R \big(\beta_\delta(u)+1\big) \, dx 
+
C \Big( \log\Big(1+\frac{\pi^2}{4\delta}\Big)+1\Big) \exp(-c\lambda) \|d_2 \|_{*},
\end{split}
\end{equation}
where in the last inequality we applied Lemma~\ref{lemma:bmo-dec} to the function $d_2(t, \cdot)$.

For every $t\in [0,T]$ we define the functions
$$a_\lambda(t) =  \|  d_1(t, \cdot) \|_{L^\infty(\R^d)} + \lambda \|  d_2(t,\cdot) \|_{*}  + (d+1) b_2(t),$$
$$b_{\lambda, R}(t) = \big(\|  d_1(t, \cdot) \|_{L^\infty(\R^d)} + \lambda \|  d_2(t,\cdot) \|_{*}\big) \| \varphi_R \|_{L^1(\R^d)} + C \exp(-c\lambda) \|d_2 (t,\cdot) \|_{*},
$$
 $$c_{R}(t) = (d+1) \|b_1(t,\cdot) \|_{L^1(\R^d \setminus B_R)},$$
 $$d_{\lambda}(t) = C\exp(-c\lambda) \|d_2 (t,\cdot)  \|_{*} .
 $$
From \eqref{defn:bmo-renorm-sol}, \eqref{eqn:bmo-est-1}, and \eqref{eqn:bmo-est-2} we deduce that
 $$\frac{d}{dt}\Gamma_{\delta,R}(t)  \leq a_\lambda(t) \Gamma_{\delta,R}(t) + b_{\lambda,R}(t) + (c_R(t)+ d_\lambda(t)) \log\Big(1+\frac{\pi^2}{4\delta}\Big).$$
 
 Let $\tau_0>0$ to be chosen later in terms of $b$ and independent on $R, \lambda$. It is enough to show that $u(t,\cdot) = 0$ for every $t\in [0, \tau_0]$, then we can repeat the argument in the following time intervals.
Since by assumption $\Gamma_{\delta,R}(0)=0$, by Gronwall lemma for every $t\in [0,\tau_0]$
\begin{equation}
\label{est:bmo-gronw}
\begin{split}
\Gamma_{\delta,R}(t)  
&\leq \exp\Big(\int_0^{\tau_0} a_\lambda (s)ds\Big)\Big(\int_0^{\tau_0} b_{\lambda, R}(s)ds + \log\Big(1+\frac{\pi^2}{4\delta}\Big) \int_0^{\tau_0} (c_{R}(s) + d_\lambda(s)) ds \Big)
\\
&=\exp(A_\lambda )\Big(B_{\lambda, R} + \log\Big(1+\frac{\pi^2}{4\delta}\Big)(C_{R} + D_\lambda) \Big).
\end{split}
\end{equation}

If, by contradiction, $u(t,\cdot)$ is not identically $0$ for some $t\in [0,{\tau_0}]$, then there exist $R_0>0$ and $\gamma>0$ such that $m=\Leb{d}(\{ x \in B_{R_0}: [\arctan u(t,x)]^2> \gamma\})>0$.
We obtain that for every $R\geq R_0$
\begin{equation}
\label{eqn:bmo-contr}
\begin{split}
\frac{m }{2^{d+1}}\log\Big(1+\frac{\gamma}{\delta} \Big) 
&\leq \Gamma_{\delta,R}(t) \leq \exp(A_\lambda)\Big(B_{\lambda,R} + \log\Big(1+\frac{\pi^2}{4\delta}\Big)(C_{R} + D_\lambda) \Big).
\end{split}
\end{equation}
First we fix $\tau_0>0$ so that 
$$\int_0^{\tau_0}  \|  d_2(t,\cdot) \|_{*} \, ds \leq \frac{c}{2}.$$
Thanks to this choice, 
$\exp(A_\lambda ) D_{\lambda}$ decays exponentially as $\lambda$ goes to $\infty$; we choose $\lambda$ sufficiently large so that $\exp(A_\lambda ) D_{\lambda} < m/2^{d+3}$. Finally, since $C_R$ goes to $0$ as $R$ goes to $\infty$, we choose $R$ sufficiently large that
$\exp(A_\lambda ) C_{R} < m/2^{d+3}$.
Dividing  \eqref{eqn:bmo-contr}  by $\log(\delta^{-1})$ and letting $\delta$ go to $0$ we find a contradiction.
\end{proof}

\end{document}